\colorlet{mylinkcolor}{violet}
\colorlet{mycitecolor}{YellowOrange}
\colorlet{myurlcolor}{Aquamarine}
\newenvironment{algorithm-hbox}{\hbadness=10000\begin{algorithm}}{\end{algorithm}}
\newtheorem{theorem}{Theorem}
\newtheorem{conjecture}[theorem]{Conjecture}
\newtheorem{corollary}[theorem]{Corollary}
\newtheorem{claim}[theorem]{Claim}
\newtheorem{obs}[theorem]{Observation}
\newtheorem{lemma}[theorem]{Lemma}
\theoremstyle{remark}
\newcommand{\set}[1]{\{#1\}}
\newcommand{\norm}[1]{{\left|#1\right|}}
\newcommand{\calC}{\mathcal{C}}
\newcommand{\N}{\mathbb{N}}
\newcommand{\R}{\mathbb{R}}
\newcommand{\WD}{\operatorname{WD}}
\newcommand{\WU}{\operatorname{WU}}
\newcommand{\wup}{\operatorname{wu}}
\newcommand{\wdown}{\operatorname{wd}}
\newcommand{\w}{\operatorname{w}}
\newcommand{\leftmost}{\operatorname{left}}
\DeclareMathOperator\Inc{Inc}
\DeclareMathOperator\val{value}
\DeclareMathOperator\Up{U}
\DeclareMathOperator\D{D}
\DeclareMathOperator\wcol{wcol}
\DeclareMathOperator\WR{WReach}
\let\le\leqslant
\let\leq\leqslant
\let\geq\geqslant
\let\subset\subseteq
\let\epsilon\varepsilon
\renewenvironment{enumerate}{\begin{enumorig}[label=\textup{(\roman*)}, noitemsep, topsep=2pt plus 2pt, labelindent=.2em, leftmargin=*, widest=iii]}{\end{enumorig}}
\newenvironment{enumerate2}{\begin{enumorig}[label=\textup{(2.\arabic*)}, noitemsep, topsep=2pt plus 2pt, labelindent=.2em, leftmargin=*, widest=iii]}{\end{enumorig}}
\renewenvironment{itemize}{\begin{itemorig}[label=\textbullet, noitemsep, topsep=2pt plus 2pt, labelindent=.5em, labelsep=.5em, leftmargin=*]}{\end{itemorig}}
\let\old@setaddresses\@setaddresses
\def\@setaddresses{\bigskip\bgroup\parindent 0pt\let\scshape\relax\old@setaddresses\egroup}
\begin{document}
\title{Nowhere Dense Graph Classes and Dimension}

\author[G.~Joret]{Gwena\"{e}l Joret}
\address[G.~Joret]{Computer Science Department \\
  Universit\'e Libre de Bruxelles, 
  Brussels, 
  Belgium}
\email{gjoret@ulb.ac.be}

\author[P.~Micek]{Piotr Micek}
\address[P.~Micek]{Theoretical Computer Science Department\\
  Faculty of Mathematics and Computer Science, Jagiellonian University, Krak\'ow, Poland 
--- and --- Institute of Mathematics, Combinatorics and Graph Theory Group \\
Freie Universit\"at Berlin, Berlin, Germany}
\email{piotr.micek@tcs.uj.edu.pl}

\author[P.~Ossona~de~Mendez]{Patrice Ossona~de~Mendez}
\address[P.~Ossona~de~Mendez]{
Centre d'Analyse et de Math\'ematiques Sociales (CNRS, UMR 8557)\\
  190-198 avenue de France, 75013 Paris, France
	--- and ---
Computer Science Institute of Charles University (IUUK)\\
   Malostransk\' e n\' am.\ 25, 11800 Praha 1, Czech Republic}
\email{pom@ehess.fr}

\author[V.~Wiechert]{Veit Wiechert}
\address[V.~Wiechert]{Institut f\"ur Mathematik\\
  Technische Universit\"at Berlin, 
  Berlin, 
  Germany}
\email{wiechert@math.tu-berlin.de}

\thanks{G.\ Joret is supported by an ARC grant from the Wallonia-Brussels Federation of Belgium. 
P.\ Micek is partially supported by a Polish National Science Center grant (SONATA BIS 5; UMO-2015/18/E/ST6/00299).  
G.\ Joret and P.\ Micek also acknowledge support from a joint grant funded by the Belgian National Fund for Scientific Research (F.R.S.--FNRS) and the Polish Academy of Sciences (PAN). 
V.\ Wiechert is supported by the Deutsche Forschungsgemeinschaft within the research training group `Methods for Discrete Structures' (GRK 1408). 
P.\ Ossona de Mendez is supported by grant ERCCZ LL-1201 
and  by the European Associated Laboratory ``Structures in
Combinatorics'' (LEA STRUCO)}

\begin{abstract}
Nowhere dense graph classes provide one of the least restrictive notions of sparsity for graphs.
Several equivalent characterizations of nowhere dense classes have been obtained over the years, using a wide range of combinatorial objects. 
In this paper we establish a new characterization of nowhere dense classes, in terms of poset dimension: 
A monotone graph class is nowhere dense if and only if for every $h \geq 1$ and every $\epsilon > 0$, posets of height at most $h$ with $n$ elements and whose cover graphs are in the class have dimension $\mathcal{O}(n^{\epsilon})$. 
\end{abstract}

\maketitle

\section{Introduction}

A class of graphs is {\em nowhere dense} if for every $r \geq 1$, there exists $t\geq 1$ such that no graph in the class contains a subdivision of the complete graph $K_t$ where each edge is subdivided at most $r$ times as a subgraph. 
Examples of nowhere dense classes include most sparse graph classes studied in the literature, such as planar graphs, graphs with bounded treewidth, graphs excluding a fixed (topological) minor, graphs with bounded maximum degree, graphs that can be drawn in the plane with a bounded number of crossings per edges, and more generally graph classes with bounded expansion. 

At first sight, being nowhere dense might seem a weak requirement for a graph class to satisfy. 
Yet, this notion captures just enough structure to allow solving a wide range of algorithmic problems efficiently: In their landmark paper, Grohe, Kreutzer, and Siebertz~\cite{GKS17} proved for instance that every first-order property can be decided in almost linear time on graphs belonging to a fixed nowhere dense class. 

One reason nowhere dense classes attracted much attention in recent years is the realization that they can be characterized in several, seemingly different ways. 
Algorithmic applications in turn typically build on the `right' characterization for the problem at hand and sometimes rely on multiple ones, such as in the proof of Grohe {\it et al.}~\cite{GKS17}. 
Nowhere dense classes were characterized in terms of 
shallow minor densities~\cite{NOdM-nowhere-dense}  and consequently  in terms of generalized coloring numbers (by results from \cite{Zhu09}),
low tree-depth colorings~\cite{NOdM-nowhere-dense} (by results from \cite{NOdM-decomp}), and subgraph densities in shallow minors \cite{Taxi_hom}; they were  also  characterized in terms of quasi-uniform wideness~\cite{NOdM10, KRS17, PST17}, 
the so-called splitter game~\cite{GKS17}, 
sparse neighborhood covers~\cite{GKS17}, 
neighborhood complexity~\cite{EGKKPRS16}, 
the model theoretical notion of stability~\cite{AA14}, as well as existence of particular analytic limit objects~\cite{NodM-modelings}. 
The reader is referred to the survey on nowhere dense classes by Grohe, Kreutzer, and Siebertz~\cite{GKS13} for an overview of the different characterizations, and to the textbook by Ne{\v{s}}et{\v{r}}il and Ossona de Mendez~\cite{NOdM-book} for a more general overview of the various notions of sparsity for graphs  (see also \cite{SurveyND}). 

The main contribution of this paper is a new characterization of nowhere dense classes that brings together graph structure theory and the combinatorics of partially ordered sets (posets). 
Informally, we show that the property of being nowhere dense can be captured by looking at the dimension of posets whose order diagrams are in the class when seen as graphs.

Recall that the {\em dimension $\dim(P)$} of a poset $P$ is the least integer $d$ such that the elements of $P$ can be embedded into $\R^d$ in such a way that $x<y$ in $P$ if and only if the point of $x$ is below the point of $y$ with respect to the product order of $\R^d$. 
Dimension is a key measure of a poset's complexity.  

The standard way of representing a poset is to draw its \emph{diagram}: 
First, we draw each element as a point in the plane, in such a way that if $a<b$ in the poset then $a$ is drawn below $b$. 
Then, for each relation $a<b$ in the poset not implied by transitivity (these are called \emph{cover relations}), we draw a $y$-monotone curve going from $a$ up to $b$. 
The diagram implicitly defines a corresponding undirected graph, where edges correspond to pairs of elements in a cover relation. 
This is the \emph{cover graph} of the poset.
Let us also recall that the {\em height} of a poset is the maximum size of a chain in the poset (a set of pairwise comparable elements). 

Recall that a {\em monotone} class means a class closed under taking subgraphs. 
Our main result is the following theorem.  

\begin{theorem}
\label{thm:main}
Let $\calC$ be a monotone class of graphs. 
Then $\calC$ is nowhere dense if and only if for every integer $h\geq 1$ and real number $\epsilon>0$, $n$-element posets of height at most $h$ whose cover graphs are in $\calC$ have dimension $\mathcal{O}(n^\epsilon)$.
\end{theorem}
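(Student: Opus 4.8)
The plan is to prove the two implications separately; the first one is short, while the second carries the technical weight.

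\textbf{The easy direction (dimension bound $\Rightarrow$ nowhere dense).} We argue by contrapositive. Suppose $\calC$ is somewhere dense. Since $\calC$ is monotone, it is well known (see e.g.\ \cite{NOdM-book}) that there is an integer $r\ge 0$ such that for every $m\ge 1$ the graph obtained from $K_m$ by subdividing each edge exactly $r$ times belongs to $\calC$: somewhere denseness yields an $r$ such that for arbitrarily large $t$ some member of $\calC$ contains a subdivision of $K_t$ in which every edge is subdivided at most $r$ times; a Ramsey argument finds inside it a large clique all of whose edges are subdivided the same number $a\le r$ of times, and since $a$ ranges over a fixed finite set, one value works for arbitrarily large cliques, so by monotonicity the exact $a$-subdivision of every clique lies in $\calC$. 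Fix such an $r$ and set $h:=r+2$. For $m\ge 2$ let $P_m$ be the poset obtained from the \emph{standard example} $S_m$ --- minimal elements $a_1,\dots,a_m$, maximal elements $b_1,\dots,b_m$, with $a_i<b_j$ exactly when $i\ne j$ --- by replacing each cover relation $a_i\lessdot b_j$ with a chain $a_i\lessdot c^1_{ij}\lessdot\cdots\lessdot c^r_{ij}\lessdot b_j$ on $r$ fresh elements. Then $P_m$ has height $h$, it has $n:=2m+r\,m(m-1)$ elements (so $m=\Omega_r(\sqrt n)$), and $\cover(P_m)$ is the $r$-subdivision of $K_{m,m}$ minus a perfect matching, which is a subgraph of the $r$-subdivision of $K_{2m}$; hence $\cover(P_m)\in\calC$ by monotonicity. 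On the other hand, the subposet of $P_m$ induced on $\{a_1,\dots,a_m,b_1,\dots,b_m\}$ is precisely $S_m$, so $\dim(P_m)\ge\dim(S_m)=m=\Omega_r(\sqrt n)$. For this fixed $h$ and, say, $\epsilon=\tfrac14$, the posets $P_m$ thus contradict the assumed bound $\dim=O(n^\epsilon)$, so $\calC$ must in fact be nowhere dense.

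\textbf{The main direction (nowhere dense $\Rightarrow$ dimension bound).} Assume $\calC$ is nowhere dense, fix $h\ge 1$ and $\epsilon>0$, and let $P$ be an $n$-element poset of height at most $h$ with $G:=\cover(P)\in\calC$; we must show $\dim(P)=O_{h,\epsilon}(n^\epsilon)$. The plan is to route everything through the generalized colouring numbers of $G$, using two ingredients:
\begin{enumerate}
\item \emph{(Known, e.g.\ \cite{Zhu09,NOdM-nowhere-dense}.)} If $\calC$ is monotone and nowhere dense, then for every fixed $r$ and every $\delta>0$, every $N$-vertex graph in $\calC$ has weak $r$-colouring number $\wcol_r(\cdot)=O_{r,\delta}(N^{\delta})$.
\item \emph{(Main technical step.)} There exist a constant $r=r(h)$ and a polynomial $q_h$, of degree depending only on $h$, such that every poset $Q$ of height at most $h$ with cover graph $H$ satisfies $\dim(Q)\le q_h\!\big(\wcol_{r(h)}(H)\big)$.
\end{enumerate}
Granting (ii), apply (i) with $r=r(h)$ and $\delta:=\epsilon/\deg(q_h)$ to get $\wcol_{r(h)}(G)=O(n^{\delta})$, and hence $\dim(P)\le q_h\!\big(O(n^{\delta})\big)=O\!\big(n^{\,\delta\deg(q_h)}\big)=O(n^\epsilon)$. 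Since $h$ and $\epsilon$ were arbitrary, the theorem follows.

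\textbf{Proving (ii), and where the difficulty lies.} We use the standard description of dimension via reversible sets: for a poset $Q$ that is not a chain, $\dim(Q)$ is the least number of parts in a partition of $\Inc(Q)$ none of whose parts contains an alternating cycle, and after standard reductions it suffices to handle incomparable pairs $(x,y)$ in which $x$ is minimal and $y$ is maximal. Fix a linear order $\pi$ of $V(H)$ witnessing $\wcol_{r}(H)\le k$, so every vertex weakly $r$-reaches at most $k$ vertices with respect to $\pi$; write $\WR_r(v)$ for that set. To each such pair $(x,y)$ we attach a bounded amount of data extracted from a ``witnessing walk'' in $H$ joining $x$ and $y$ whose length is bounded by a function of $h$ --- such a walk is built by alternately chasing cover relations upward from $x$ and downward from $y$, which is why $r(h)$ is a function of $h$: roughly, the $\pi$-smallest vertex $z$ that lies in $\WR_r(u)$ for suitable vertices $u$ on both sides of the walk, recorded not by its identity (which would range over all of $V(H)$) but by its \emph{position} inside those sets of size $\le k$, together with $O_h(1)$ further bits describing the combinatorial shape of the walk. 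Thus the number of possible data values is polynomial in $k$ with degree depending only on $h$, which becomes $\deg(q_h)$. The crux --- and the step I expect to be the main obstacle --- is to show that any set of pairs sharing the same data is reversible: assuming an alternating cycle $(x_0,y_0),\dots,(x_{s-1},y_{s-1})$ inside such a set, one combines the comparabilities $x_i<y_{i+1}$ with the monotonicity of weak reachability along $\pi$ and with the common value of $z$ to contradict the $\pi$-minimality built into the data (or to exhibit a comparability that was excluded). Making this implication go through with only \emph{polynomially} many data values --- rather than the exponentially many that a naive bookkeeping of ``which'' vertices are weakly reachable would produce --- is exactly what is delicate, and it is precisely the polynomial (rather than exponential) dependence on $\wcol$ that makes ingredient (i) strong enough to yield an $n^\epsilon$ bound.
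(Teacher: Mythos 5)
Your backward (contrapositive) direction is correct and essentially the paper's argument, up to an unnecessary Ramsey detour: one does not need an \emph{exact} $a$-subdivision of every clique, since an $\leq\!r$-subdivision of the cover graph of $S_m$ (a subgraph of the $\leq\!r$-subdivision of $K_{2m}$ supplied by somewhere denseness and monotonicity) is already the cover graph of a poset of height at most $r+2$ containing $S_m$ as an induced subposet.

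The forward direction, however, has a genuine gap: your key ingredient (ii) is false as stated. You claim that for every $h$ there are $r(h)$ and a polynomial $q_h$, \emph{whose degree depends only on $h$}, with $\dim(Q)\le q_h\bigl(\wcol_{r(h)}(H)\bigr)$ for all posets $Q$ of height at most $h$ with cover graph $H$. This is contradicted by the construction in Theorem~\ref{thm:lb-tw-construct}: already for height $h=4$, the posets $P_{2,t'}$ have dimension at least $2^{t'+1}$ (they contain $S_{2^{t'+1}}$ as an induced subposet), while their cover graphs have treewidth at most $2t'+1$, so by the bound $\wcol_r(G)\le\binom{\mathrm{tw}+r}{\mathrm{tw}}$ of Grohe et al.\ we get $\wcol_{r(4)}\le\binom{2t'+1+r(4)}{r(4)}$, which is only polynomial in $t'$ for fixed $r(4)$. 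Hence no fixed polynomial $q_4$ can bound an exponential-in-$t'$ dimension by a polynomial-in-$t'$ quantity. This is precisely why the paper cannot work with $\wcol$ alone: the only bound available in that generality is $\dim(P)\le 4^{\wcol_{3h-3}(G)}$ (Theorem~\ref{thm:dim-wcol}), which is exponential in the weak coloring number and therefore useless once $\wcol$ grows like $n^{\epsilon}$. Your sketch of (ii) -- recording the ``position'' of a leftmost weakly reachable vertex inside sets of size at most $k$ plus $O_h(1)$ bits -- is a variant of the signature argument behind Theorem~\ref{thm:dim-wcol}, and by the counterexample above no such bookkeeping with polynomially many (in $k$, degree depending only on $h$) signature classes can exist.

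What is missing is the second parameter that nowhere denseness provides: for $r=2h-3$ there is a $t$ such that no cover graph in $\calC$ contains an $\leq\!(2h-3)$-subdivision of $K_t$, and the correct statement (Theorem~\ref{thm:nowhere-dense-upper-bound}) bounds $\dim(P)$ by $(3c)^{f(h,t)}$ with $c=\wcol_{4h-4}(G)$, the exponent depending on \emph{both} $h$ and $t$ (which is harmless, since $t$ depends only on $\calC$ and $h$). Its proof is not a signature argument: it iteratively extracts, from any poset of too-large dimension, branch vertices and short internally disjoint paths forming an $\leq\!(2h-3)$-subdivision of $K_t$, using Lemma~\ref{lemma:q-support} (a weak-coloring-based decomposition of a set of incomparable pairs) to control each step. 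So your high-level plan (weak coloring characterization of nowhere denseness plus a dimension bound polynomial in $\wcol$) matches the paper, but the technical core you defer to is not merely ``delicate'' -- in the form you state it, it cannot be proved, and repairing it requires bringing the excluded clique subdivision into the statement and a substantially different argument.
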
 

This result is the latest step in a series of recent works connecting poset dimension with graph structure theory. 
This line of research began with the following result of Streib and Trotter~\cite{ST14}: For every fixed $h\geq 1$, posets of height $h$ with a planar cover graph have bounded dimension. 
That is, the dimension of posets with planar cover graphs is bounded from above by a function of their height. 
This is a remarkable theorem, because in general bounding the height of a poset does not bound its dimension, as shown for instance by the height-$2$ posets called {\em standard examples}, depicted in Figure~\ref{fig:kelly} (left). 
Requiring the cover graph to be planar does not guarantee any bound on the dimension either, as shown by Kelly's construction~\cite{Kel81} of posets with planar cover graphs containing large standard examples as induced subposets (Figure~\ref{fig:kelly}, right). 
Thus, it is the combination of the two ingredients, bounded height and planarity, that implies that the dimension is bounded. 

\begin{figure}[t]
\centering
\includegraphics[scale=1.0]{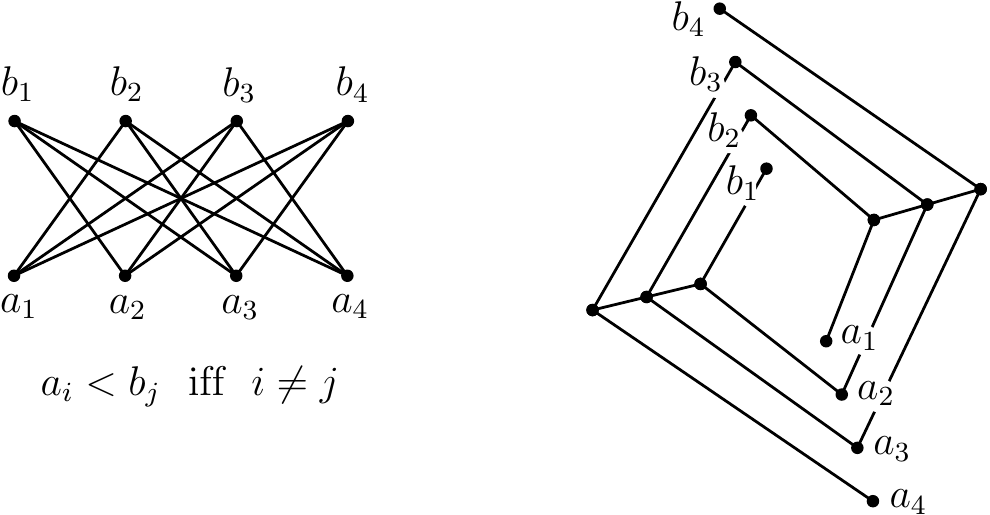}
\caption{\label{fig:kelly} The standard example $S_4$ (left) and Kelly's construction containing $S_4$ (right). 
The {\em standard example} $S_m$ ($m\geq 2$) is the height-$2$ poset consisting of $m$ minimal elements $a_1, \dots, a_m$ and $m$ maximal elements $b_1, \dots, b_m$ and the relations $a_i < b_j$ for all $i, j \in [m]$ with $i\neq j$. 
It has dimension $m$.}
\end{figure}

Soon afterwards, it was shown in a sequence of papers that requiring the cover graph to be planar in the Streib-Trotter result could be relaxed: 
Posets have dimension upper bounded by a function of their height if their cover graphs
\begin{itemize}
\item have bounded treewidth, bounded genus, or more generally exclude an apex-graph as minor~\cite{JMMTWW};
\item exclude a fixed graph as a (topological) minor~\cite{Walczak17, MW15}; 
\item belong to a fixed class with bounded expansion~\cite{JMW17+}. 
\end{itemize}

A class of graphs has {\em bounded expansion} if for every $r \geq 1$, there exists $c\geq 0$ such that no graph in the class contains a subdivision of a graph with average degree at least $c$ where each edge is subdivided at most $r$ times as a subgraph. 
This is a particular case of nowhere dense classes. 

Zhu~\cite{Zhu09} characterized bounded expansion classes as follows: A class has bounded expansion if and only if for every $r \geq 0$, there exists $c \geq 1$ such that every graph in the class has weak $r$-coloring number at most $c$. 
Weak coloring numbers were originally introduced by Kierstead and Yang~\cite{KY03} as a generalization of the degeneracy of a graph (also known as the \emph{coloring number}). 
They are defined as follows.  
Let $G$ be a graph and consider some linear order $\pi$ on its vertices (it will be convenient to see $\pi$ as ordering the vertices of $G$ from left to right).  
Given a path $Q$ in $G$, we denote by $\leftmost(Q)$ the leftmost vertex of $Q$ w.r.t.\ $\pi$.
Given a vertex $v$ in $G$ and an integer $r\geq 0$, we say that $u\in V(G)$ is \emph{weakly $r$-reachable from} $v$ w.r.t.\ $\pi$ if there exists a path $Q$ of length at most $r$ from $v$ to $u$ in $G$ such that $\leftmost(Q)=u$.
We let $\WR_r^\pi[v]$ denote the set of weakly $r$-reachable vertices from $v$ w.r.t.\  $\pi$ (note that this set contains $v$ for all $r\geq 0$).
The {\em weak $r$-coloring number} $\wcol_r(G)$ of $G$ is defined as
\[
 \wcol_r(G):=\min_{\pi} \max_{v\in V(G)} |\WR_r^\pi[v]|.
\]

The novelty of our approach in this paper is that we bound the dimension of a poset using weak coloring numbers of its cover graph. 
Indeed, the general message of the paper is that dimension works surprisingly well with weak coloring numbers. 
We give a first illustration of this principle with the following theorem: 

\begin{theorem}
\label{thm:dim-wcol}
Let $P$ be a poset of height at most $h$, let $G$ denote its cover graph, and let $c := \wcol_{3h-3}(G)$. Then 
 \[
  \dim(P)\leq 4^c.
 \]
\end{theorem}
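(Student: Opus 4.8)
The plan is to fix an optimal linear order $\pi$ on $V(G)$ witnessing $\wcol_{3h-3}(G)=c$, and to use it to build a small family of linear extensions of $P$ that realizes $\dim(P)$. Recall that $\dim(P)$ is at most the least $d$ such that there are $d$ linear extensions of $P$ with the property that every incomparable pair $(x,y)$ is ``reversed'' by some extension, i.e.\ there is one extension with $x$ above $y$ and one with $y$ above $x$; equivalently, it suffices to reverse every \emph{critical pair}. So the real task is: using $\pi$, color the incomparable pairs of $P$ with at most $\log_4(4^c)=c$-ish many colors — more precisely, define a coloring with $t$ colors so that each color class is reversible by a single linear extension and $2^t\le 4^c$, say $t\le 2c$ — and then take the $2t$ linear extensions (two per color, reversing the class in each direction).

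The coloring should be defined purely from local reachability data. For an incomparable pair $(x,y)$ in $P$, one can look at the weakly reachable sets in $G$ from $x$ and from $y$ with radius tied to $3h-3$ (the bound $3h-3$ is the natural one: a ``shortest'' alternating witness between two incomparable elements at height $\le h$ uses at most three cover-path segments, each of length $<h$). The idea is that $\WR_{3h-3}^\pi[x]$ and $\WR_{3h-3}^\pi[y]$ each have size at most $c$, and the relevant combinatorial type of the pair — recording which of these reachable vertices is leftmost, and on which ``side'' $x$ and $y$ sit relative to it — takes boundedly many values in terms of $c$. First I would make this precise: assign to $(x,y)$ the pair consisting of the $\pi$-leftmost vertex $w$ on some carefully chosen witnessing structure, together with a bit indicating the role of $x$ versus $y$ with respect to $w$; argue there are at most $2\cdot$(something like $4^c$, or after a cleaner accounting $\le c$ usable colors) possibilities, and crucially that within one color class the pairs are ``aligned'' so that a single linear extension — obtained by ordering blocks according to $\pi$ and breaking ties by a fixed linear extension — reverses all of them simultaneously.

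Concretely, the construction of the linear extension for a color class would go through the standard ``min/max'' or ``left-right'' scheme used in this area: partition $V(P)$ according to the leftmost-reachable vertex attached to each element, order these parts along $\pi$, and inside each part recurse or use an arbitrary linear extension; then verify that for every incomparable pair of the designated color, $x$ genuinely lands above (resp.\ below) $y$. The verification that comparabilities of $P$ are respected (so that what we build is a genuine linear extension) is routine but must be checked: it relies on the fact that a cover relation $a<b$ gives a length-$1$ path, so $a$ and $b$ have strongly overlapping weakly reachable sets, keeping them in compatible blocks. The bookkeeping that makes the number of colors at most linear in $c$ — so that $2t \le 4^c$ comfortably — is where I expect the main obstacle: one has to argue that knowing $\WR^\pi_{3h-3}[x]$ as a \emph{set} (or even just its leftmost element together with $O(1)$ bits) is enough to determine, consistently across all pairs sharing that data, which direction the pair must be reversed in. Getting the radius exactly $3h-3$ and the exponent exactly $4^c$ will require choosing the alternating witness between incomparable pairs optimally and being slightly clever about how the ``side bit'' interacts with the blocks; I would prove a short lemma first isolating the claim that any two incomparable $x,y$ admit a witness path of length $\le 3h-3$ whose leftmost vertex governs their relative order, and build the rest around that.
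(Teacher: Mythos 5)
Your high-level frame (fix a $\pi$ witnessing $\wcol_{3h-3}(G)\le c$, attach to each incomparable pair a ``signature'' built from weak-reachability data, and show each signature class is reversible) matches the paper, and your intuition that $3h-3$ comes from three covering-chain segments of length at most $h-1$ is on target. But there is a genuine gap at the exact point you flag as the main obstacle: none of your proposed signatures is bounded by a function of $c$. Recording ``which reachable vertex is leftmost'' gives up to $n$ values, and recording $\WR^\pi_{3h-3}[x]$ as a set gives up to exponentially many in $n$; either way the number of classes is not $4^c$, and a bound with a factor of $n$ is vacuous since $\dim(P)\le n$ always. The missing idea is to first \emph{greedily color the elements} of $P$ along $\pi$ with colors in $[c]$, giving $x$ a color absent from $\WR^\pi_{3h-3}[x]-\{x\}$; Observation~\ref{obs-weak-reachability} then guarantees that any two distinct elements of $\WR^\pi_{h-1}[x]$ receive distinct colors, so elements weakly reachable from $x$ via covering chains can be \emph{named by their colors}. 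The paper's signature of $(x,y)$ is then a nested triple $C\subseteq B\subseteq A\subseteq[c]$ of color sets (colors appearing in the weakly reachable upset of $x$, those also in the weakly reachable downset of $y$, and those where the $x$-witness lies $\pi$-left of the $y$-witness), which is exactly where $4^c$ comes from. Your accounting ($2^t\le 4^c$, ``$2t$ linear extensions, two per color'') is also off: partitioning $\Inc(P)$ into $d$ reversible classes directly gives $\dim(P)\le d$, one extension per class, so the target is $4^c$ classes, not $c$-ish colors.

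The second gap is the reversibility verification itself. You propose to build explicit linear extensions by ordering blocks along $\pi$, and to rest on a ``short lemma'' that any two incomparable $x,y$ admit a witness path of length $\le 3h-3$ whose leftmost vertex governs their order; no such lemma is available (incomparable elements need not be joined by any short path, and nothing about the pair $(x,y)$ itself is used this way). What the paper does instead is show that a signature class contains no alternating cycle: given a cycle $(x_1,y_1),\dots,(x_k,y_k)$, it chooses for each comparability $x_j\le y_{j+1}$ a covering chain whose leftmost vertex is as far left as possible, takes the globally $\pi$-leftmost such vertex $q_1$, and uses its color $t$ together with Observation~\ref{obs-weak-reachability} (applied to the $(h-1)$-reachable witnesses $\wup_t(\cdot)$, $\wdown_t(\cdot)$ and a connecting chain of length $\le h-1$, totalling $3h-3$) and the color-uniqueness property to produce an element strictly left of $q_1$ realizing some $x_i\le y_{i+1}$, contradicting the extremal choice; the two cases $t\in C$ and $t\notin C$ correspond to your ``side bit''. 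Without the element coloring and this alternating-cycle argument with its extremal choice, the proposal does not yield the stated bound.
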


To prove this, we first make the following observation about weak reachability.  
\begin{obs}\label{obs-weak-reachability} 
Let $G$ be a graph and let $\pi$ be a linear order on its vertices. 
If $w, x, y, z$ are vertices of $G$ such that $w$ is weakly $k$-reachable from $x$ (w.r.t.\ $\pi$), $y$ is weakly $\ell$-reachable from $z$, 
and $Q$ is a path from $x$ to $z$ in $G$ of length at most $m$ such that $\leftmost(\set{w,y})\leq_{\pi} \leftmost(Q)$, then
\begin{center}
\text{one of $w,y$ is weakly $(k+\ell+m)$-reachable from the other.}
\end{center}
In particular, this holds if $m=0$, i.e.\ $x=z$.
\end{obs}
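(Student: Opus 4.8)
The plan is to concatenate three paths into a single walk from $w$ to $y$, and then extract from it a genuine path whose leftmost vertex is forced to be one of the two endpoints.

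First, fix witnessing paths: let $Q_1$ be a path of length at most $k$ from $x$ to $w$ with $\leftmost(Q_1)=w$, and let $Q_2$ be a path of length at most $\ell$ from $z$ to $y$ with $\leftmost(Q_2)=y$. Following $Q_1$ from $w$ to $x$, then $Q$ from $x$ to $z$, then $Q_2$ from $z$ to $y$ yields a walk $W$ from $w$ to $y$ of length at most $k+m+\ell$. The crucial bookkeeping step is to bound which vertices can appear on $W$: every vertex of $Q_1$ is $\geq_\pi w$, every vertex of $Q$ is $\geq_\pi \leftmost(Q)$, and every vertex of $Q_2$ is $\geq_\pi y$. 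Since $\leftmost(\set{w,y}) \leq_\pi \leftmost(Q)$ by hypothesis, each of these three lower bounds is $\geq_\pi \leftmost(\set{w,y})$, so every vertex of $W$ is $\geq_\pi \leftmost(\set{w,y})$.

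Next, pass from $W$ to a subpath $R$ from $w$ to $y$ by repeatedly shortcutting between repeated occurrences of a vertex; this does not increase the length nor touch the endpoints, so $R$ is a path from $w$ to $y$ of length at most $k+\ell+m$ whose vertex set is contained in that of $W$, hence in $\set{v : v \geq_\pi \leftmost(\set{w,y})}$. As both $w$ and $y$ lie on $R$ and one of them equals $\leftmost(\set{w,y})$, that vertex is exactly $\leftmost(R)$. If $\leftmost(R)=w$, then $R$ witnesses that $w$ is weakly $(k+\ell+m)$-reachable from $y$; if $\leftmost(R)=y$, it witnesses the symmetric statement. Either way, one of $w,y$ is weakly $(k+\ell+m)$-reachable from the other. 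For the $m=0$ case, note that then $Q$ is the single vertex $x=z$, so $\leftmost(Q)=x$; since $w$ is weakly reachable from $x$ and $y$ from $z=x$, we have $w\leq_\pi x$ and $y\leq_\pi x$, so the hypothesis $\leftmost(\set{w,y})\leq_\pi\leftmost(Q)$ holds automatically.

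I do not expect a real obstacle here: the only mild care needed is in the extraction of $R$ from $W$, to ensure that both endpoints survive and that the leftmost-vertex bound is inherited — but shortcutting only deletes interior vertices, so both facts are immediate.
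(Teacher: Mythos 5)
Your proof is correct and follows essentially the same route as the paper's: concatenate the two witnessing paths with $Q$, extract a $w$--$y$ path of length at most $k+\ell+m$, and observe that every vertex involved is $\geq_\pi \leftmost(\set{w,y})$, so the leftmost vertex of the extracted path is $w$ or $y$. You simply spell out the walk-to-path shortcutting and the $m=0$ remark more explicitly than the paper does.
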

\begin{proof}
Consider a path $Q^{(1)}$ from $w$ to $x$ witnessing that $w$ is weakly $k$-reachable from $x$, and a path $Q^{(2)}$ from $z$ to $y$ witnessing that $y$ is weakly $\ell$-reachable from $z$.
The union of $Q$, $Q^{(1)}$ and $Q^{(2)}$ contains a path $Q^{(3)}$ connecting $w$ to $y$ of length at most $k+\ell+m$.
Since $w$ is the leftmost vertex of $Q^{(1)}$ in $\pi$ and $y$ is the leftmost vertex of $Q^{(2)}$ in $\pi$, and $\leftmost(\set{w,y})\leq_{\pi} \leftmost(Q)$ we have that one of $w$ and $y$ is the leftmost vertex of $Q^{(3)}$ in $\pi$. 
This proves that one of $w,y$ is weakly $(k+\ell+m)$-reachable from the other.
\end{proof}

Before continuing with the proof, let us introduce some necessary definitions regarding posets.  
Let $P$ be a poset.  
An element $y$ {\em covers} an element $x$ if $x < y$ in $P$ and there is no element $z$ such that $x < z < y$ in $P$. 
A chain $X$ of $P$ is said to be a {\em covering chain} if the elements of $X$ can be enumerated as $x_1, x_2, \dots, x_k$ in such a way that $x_{i+1}$ covers $x_i$ in $P$ for each $i\in [k-1]$. 
(We use the notation $[n] := \{1, \dots, n\}$.)  
The {\em upset $\Up[x]$} of an element $x\in P$ is the set of all elements $y\in P$ such that $x \leq y$ in $P$. 
Similarly, the {\em downset $\D[x]$} of an element $x\in P$ is the set of all elements $y\in P$ such that $y \leq x$ in $P$. 
Note that $x\in \Up[x]$ and $x\in \D[x]$. 
Given a subset $S$ of elements of $P$, we write $\Up[S]$ for the set $\bigcup_{x\in S}\Up[x]$, and define $\D[S]$ similarly. 

An {\em incomparable pair} of $P$ is an ordered pair $(x,y)$ of elements of $P$ that are incomparable in $P$.
We denote by $\Inc(P)$ the set of incomparable pairs of $P$.
Let $I \subseteq \Inc(P)$ be a non-empty set of incomparable pairs of $P$.
We say that $I$ is \emph{reversible} if there is a linear extension $L$ of $P$ \emph{reversing} each pair of $I$, that is, we have $x>y$ in $L$ for every $(x,y)\in I$. 
We denote by $\dim(I)$ the least integer $d$ such that $I$ can be partitioned into $d$ reversible sets. 
We will use the convention that $\dim(I)=1$ when $I$ is an empty set.   
As is well known, the dimension $\dim(P)$ of $P$ can equivalently be defined as $\dim(\Inc(P))$, that is, the least integer $d$ such that the set of all incomparable pairs of $P$ can be partitioned into $d$ reversible sets. 
This is the definition that we will use in the proofs. 

A sequence $(x_1,y_1), \dots, (x_k,y_k)$ of incomparable pairs of $P$ with $k \geq 2$ is said to be an \emph{alternating cycle of size $k$} if $x_i\leq y_{i+1}$ in $P$ for all $i\in\set{1,\ldots,k}$ (cyclically, so $x_k\le y_1$ in $P$ is required). 
(We remark that possibly $x_i=y_{i+1}$ for some $i$'s.) 
Observe that if $(x_1,y_1), \dots, (x_k,y_k)$ is a alternating cycle in $P$, then this set of incomparable pairs cannot be reversed by a linear extension $L$ of $P$. 
Indeed, otherwise we would have $y_i < x_i \leq y_{i+1}$ in $L$ for each $i \in \{1,2, \dots, k\}$ cyclically, which cannot hold. 
Hence, alternating cycles are not reversible. 
The converse is also true, as is well known: 
A set $I$ of incomparable pairs of a poset $P$ is reversible if and only if $I$ contains no alternating cycles.

We may now turn to the proof of Theorem~\ref{thm:dim-wcol}. 

\begin{proof}[Proof of Theorem~\ref{thm:dim-wcol}]
Let $\pi$ be a linear order on the elements of $P$ such that $\WR_{3h-3}^\pi[x]\leq c$ for each $x\in P$. 
Here and in the rest of the proof, weak reachability is to be interpreted w.r.t.\ the cover graph $G$ of $P$ and the ordering $\pi$. 

First, we greedily color the elements of $P$ using the ordering $\pi$ from left to right. 
When element $x$ is about to be colored, we give $x$ the smallest color $\phi(x)$ in $[c]$ that is not used for elements of $\WR_{3h-3}^\pi[x] -\{x\}$.
Let $x\in P$ and $y,z\in \WR_{h-1}^{\pi}[x]$.
By Observation~\ref{obs-weak-reachability}, either $y\in\WR_{2h-2}^{\pi}[z]$ or $z\in\WR_{2h-2}^{\pi}[y]$.
Therefore,
\begin{equation}
\label{eq:unique-color}
\text{$\phi(y)\neq \phi(z)$ for every $x\in P$ and $y,z\in \WR_{h-1}^{\pi}[x]$ with $y\neq z$.}
\end{equation}
In the proof, we will focus on elements $y$ in $\WR_{h-1}^\pi[x]$ that are weakly reachable from $x$ via covering chains that either start at $x$ and end in $y$, or the other way round.  
This leads us to introduce the \emph{weakly reachable upset $\WU[x]$} and the \emph{weakly reachable downset $\WD[x]$} of $x$:
 \begin{align*}
  \WU[x]:=\{y\in \Up[x] \exists\text{ covering chain $Q$ from $x$ to $y$ such that }\leftmost(Q)=y\},\\
  \WD[x]:=\{y\in \D[x]: \exists\text{ covering chain $Q$ from $y$ to $x$ such that }\leftmost(Q)=y\}.
 \end{align*}
Then $\WD[x], \WU[x] \subseteq \WR_{h-1}^{\pi}[x]$.
If $X$ is a set of elements of $P$, we write $\phi(X)$ for the set of colors $\{\phi(x) : x\in X\}$. 
Given an element $x$ of $P$ and a color $i\in\phi(\WU[x])$, 
by~\eqref{eq:unique-color} there is a unique element in $\WU[x]$ with color $i$; 
let us denote it by $\wup_i(x)$. 
Similarly, given $i\in\phi(\WD[x])$, we let $\wdown_i(x)$ denote the unique element in $\WD[x]$ with color $i$.  

For each $(x,y)\in\Inc(P)$, define the \emph{signature} $(A,B,C)$ of $(x,y)$, where
\[
A=\phi(\WU[x]),\ B=A\cap\phi(\WD[y]),\ C=\set{i\in B\mid \wup_i(x)<_{\pi} \wdown_i(y)}.
\]
As $[c]\supseteq A \supseteq B \supseteq C$, the number of possible signatures is at most $4^c$.
It remains to show that the set of incomparable pairs with a given signature  is reversible. 
This will show that $P$ has dimension at most $4^c$, as desired. 

Arguing by contradiction, suppose that there is a signature $(A,B,C)$ such that the set of incomparable pairs with signature $(A,B,C)$ is not reversible. 
Then these incomparable pairs contain an alternating cycle $(x_1,y_1),\ldots,(x_k,y_k)$. 

For each $j\in [k]$, consider all covering chains witnessing the comparability $x_j\leq y_{j+1}$ in $P$ (indices are taken cyclically) and choose one such covering chain $Q_j$ such that $q_j=\leftmost(Q_j)$ is as far to the left as possible w.r.t.\ $\pi$. 
Without loss of generality we may assume that $q_1$ is leftmost w.r.t.\ $\pi$ among the $q_j$'s. 

Let $t:=\phi(q_1)$.  
Clearly, $x_1\leq q_1\leq y_{2}$ in $P$, and $q_1\in\WU[x_1]\cap \WD[y_{2}]$. 
Thus, $t \in\phi(\WU[x_1])=A=\phi(\WU[x_{2}])$ and $t\in\phi(\WD[y_{2}])$, and hence $t\in \phi(\WU[x_{2}]) \cap \phi(\WD[y_{2}]) = B$.  
It follows that $\wup_t(x_j)$ and  $\wdown_t(y_j)$ are both defined for each $j\in [k]$. 

First suppose that $t\in C$. 
In particular, $\wup_t(x_2) <_{\pi} \wdown_t(y_2)$.
Thus
\[
\wup_t(x_2) <_{\pi} \wdown_t(y_2) = q_1 \leq_{\pi} q_2 = \leftmost(Q_2).
\]
Since $\wup_t(x_2)$ is $(h-1)$-weakly reachable from $x_2$, 
$\wdown_t(y_3)$ is $(h-1)$-weakly reachable from $y_3$, 
and $Q_2$ is a path in $G$ connecting $x_2$ and $y_3$ of length at most $h-1$ such that $\wup_t(x_2)<_{\pi}\leftmost(Q_2)$,
by Observation~\ref{obs-weak-reachability} one of $\wup_t(x_2)$, $\wdown_t(y_3)$ is weakly $(3h-3)$-reachable from the other. 
Since $\phi(\wup_t(x_2))=t=\phi(\wdown_t(y_3))$, we must have $\wdown_t(y_3) = \wup_t(x_2)=:q^*$ by~\eqref{eq:unique-color}. 
Hence, $x_2 \leq q^* \leq y_3$ in $P$, but $q^*<_{\pi} q_1\leq_\pi q_2$, which contradicts the way $q_2$ and $q_1$ were chosen. 

Next, suppose that $t\notin C$. 
Then, $\wdown_t(y_1) \leq_{\pi} \wup_t(x_1)$.
Note that $\wdown_t(y_1) \neq \wup_t(x_1)$, since otherwise we would have $x_1 \leq y_1$ in $P$. 
Thus, $\wdown_t(y_1) <_{\pi} \wup_t(x_1)$, and
\[
\wdown_t(y_1) <_{\pi} \wup_t(x_1)  = q_1 \leq_{\pi} q_k = \leftmost(Q_k).
\]
Since $\wdown_t(y_1)$ is $(h-1)$-weakly reachable from $y_1$, 
$\wup_t(x_k)$ is $(h-1)$-weakly reachable from $x_k$, 
and $Q_k$ is a path in $G$ connecting $x_k$ and $y_1$ of length at most $h-1$ such that $\wdown_t(y_1)<_{\pi}\leftmost(Q_k)$,
by Observation~\ref{obs-weak-reachability} one of $\wdown_t(y_1)$, $\wup_t(x_k)$ is weakly $(3h-3)$-reachable from the other. 
Since $\phi(\wup_t(x_k))=t=\phi(\wdown_t(y_1))$, we must have $\wup_t(x_k) = \wdown_t(y_1)=:q^*$ by~\eqref{eq:unique-color}. 
Hence, $x_k \leq q^* \leq y_1$ in $P$, but $q^*<_{\pi} q_1\leq_\pi q_k$, which contradicts the way $q_k$ and $q_1$ were chosen. 
\end{proof}

By Zhu's theorem, if we restrict ourselves to posets with cover graphs $G$ in a fixed class $\calC$ with bounded expansion, then $\wcol_{3h-3}(G)$ is bounded by a function of $h$. 
Thus Theorem~\ref{thm:dim-wcol} implies the theorem from~\cite{JMW17+} for classes with bounded expansion. 
However, the above proof is much simpler and implies better bounds on the dimension than those following from previous works (see the discussion in Section~\ref{sec:applications}). 
We see this as a first sign that weak coloring numbers are the right tool to use in this context. 

In~\cite{JMW17+}, it is conjectured that bounded expansion captures exactly situations where dimension is bounded by a function of the height: 

\begin{conjecture}[\cite{JMW17+}]
\label{conj:bounded_exp}
A monotone class of graphs $\calC$ has bounded expansion if and only if for every fixed $h\geq 1$, posets of height at most $h$ whose cover graphs are in $\calC$ have bounded dimension.
\end{conjecture}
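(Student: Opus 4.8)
The plan is to prove the easy direction directly from the tools developed above and to attack the converse by contraposition, isolating the one genuinely new case. For the easy direction, suppose $\calC$ has bounded expansion. By Zhu's theorem the weak coloring numbers are uniformly bounded on $\calC$: for every $r$ there is $c_r$ with $\wcol_r(G)\le c_r$ for all $G\in\calC$. Hence for any poset $P$ of height at most $h$ whose cover graph $G$ lies in $\calC$, Theorem~\ref{thm:dim-wcol} gives $\dim(P)\le 4^{\wcol_{3h-3}(G)}\le 4^{c_{3h-3}}$, a bound depending only on $h$. So posets of height at most $h$ with cover graphs in $\calC$ have dimension bounded by a function of $h$.

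The substance is the converse. Assume $\calC$ is monotone and does \emph{not} have bounded expansion; we must exhibit, for some fixed $h$, posets of height at most $h$ with cover graphs in $\calC$ and unbounded dimension. Since bounded expansion classes are nowhere dense, there are two cases. If $\calC$ is somewhere dense, Theorem~\ref{thm:main} already finishes the job: negating its conclusion yields $h$, $\epsilon>0$, and $n$-element posets of height at most $h$ with cover graphs in $\calC$ whose dimension is $\omega(n^\epsilon)$, in particular unbounded. Thus the only remaining, and new, case is $\calC$ nowhere dense but not of bounded expansion. By the standard characterizations of bounded expansion (equivalently, by Zhu's theorem, since then some $\wcol_r$ is unbounded on $\calC$), there is a fixed $r\ge 1$ such that $\calC$ contains, for every $c$, a $(\le 2r)$-subdivision of some graph $H$ of average degree at least $c$; by monotonicity this subdivision, and every $(\le 2r)$-subdivision of every subgraph of $H$, also lies in $\calC$.

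The plan for this case is to turn such a subdivided dense graph into a bounded-height poset of large dimension. The model gadget is a \emph{subdivided standard example}: given an $\ell$-subdivision with $\ell\le 2r$ fixed of the bipartite graph $K_{m,m}$ minus a perfect matching, orient each subdivided edge from a minimal element $a_i$ up to a maximal element $b_j$ along a monotone covering chain through its $\ell$ internal vertices. One checks that the transitive closure of these chains is a poset of height $\ell+2\le 2r+2$ whose cover graph is exactly this subdivision, and whose minimal and maximal elements induce the standard example $S_m$; hence its dimension is at least $m$. Using monotonicity of $\calC$, it would therefore suffice to find inside the dense graph $H$ a subgraph isomorphic to $K_{m,m}$ minus a perfect matching with $m\to\infty$ — or, more flexibly, the cover graph of \emph{any} family of bounded-height posets of unbounded dimension (a Kelly-type family, say), possibly after a bounded subdivision.

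The main obstacle is exactly this extraction step, and it is where the gap between ``bounded expansion'' and ``nowhere dense'' bites. Large average degree does not force a large complete bipartite subgraph, nor a large standard example: incidence graphs of projective planes, and more generally graphs of large girth with many edges, have no $K_{2,2}$ and hence no $S_m$ subgraph for $m\ge 4$, and such graphs can legitimately occur, up to bounded subdivision, in a non-bounded-expansion monotone class. So the naive height-$2$/height-$3$ construction can fail, and one needs a construction exploiting only the ``density in the shallow-minor sense'' that the definition provides. Two routes seem plausible: (a) strengthen the gadget, replacing the subdivided standard example by a bounded-height analogue of Kelly's construction~\cite{Kel81} that embeds, after a bounded subdivision, into every sufficiently dense graph; or (b) prove an extremal statement of the form ``every graph of sufficiently large average degree contains, as a bounded subdivision, the cover graph of a bounded-height poset of dimension at least $m$'', where the difficulty is that a bounded subdivision of a dense graph is itself sparse, so any such statement must route through the finer structure of non-bounded-expansion classes — for instance through the unbounded generalized coloring numbers $\wcol_r$ they exhibit, mirroring how Theorem~\ref{thm:dim-wcol} uses $\wcol$ on the upper-bound side, or through the Streib--Trotter-style~\cite{ST14} machinery — rather than through raw edge density. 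Carrying out either route, or finding a direct argument, is the crux, and we leave the conjecture open.
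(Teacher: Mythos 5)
You are addressing Conjecture~\ref{conj:bounded_exp}, which the paper itself does not prove: only the forward direction is known (the paper obtains it from Theorem~\ref{thm:dim-wcol} combined with Zhu's characterization of bounded expansion), and the backward direction is explicitly stated to be open. Your proposal reproduces exactly this state of affairs. Your forward direction is correct and is the same argument as the paper's: Zhu gives $\wcol_{3h-3}(G)\le c_{3h-3}$ uniformly on $\calC$, so $\dim(P)\le 4^{c_{3h-3}}$. Your reduction of the backward direction is also correct: if $\calC$ is somewhere dense, the subdivided-standard-example construction (the same one used for the backward direction of Theorem~\ref{thm:main}) yields posets of height at most $r+2$ with cover graphs in $\calC$ and dimension at least $m$ for every $m$, so dimension is unbounded at fixed height; hence the only case left is $\calC$ nowhere dense but without bounded expansion.

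That remaining case, however, is the entire content of the conjecture, and you do not close it — as you say yourself. So there is a genuine gap, though it is the same gap the paper has: nobody currently knows how to convert the unboundedness of some $\wcol_r$ on a nowhere dense monotone class into bounded-height posets of unbounded dimension with cover graphs in the class. Your diagnosis of why the naive gadget fails is accurate (large average degree, even before subdividing, does not force a $K_{2,2}$, let alone a large standard example, so one cannot simply extract a subdivided $S_m$ from a subdivided dense graph), and your proposed routes (a Kelly-type bounded-height gadget that embeds after bounded subdivision, or an extremal statement routed through generalized coloring numbers rather than raw density) are plausible directions but are not carried out. In summary: your write-up is correct as far as it goes and matches everything the paper establishes about this statement, but it is not a proof of the conjecture, whose status remains open in the paper as well.
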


While the result of~\cite{JMW17+} (reproved above) shows the forward direction of the conjecture, the backward direction remains surprisingly (and frustratingly) open.

By contrast, showing the backward direction of Theorem~\ref{thm:main} for nowhere dense classes is a straightforward matter, as we now explain. 
We prove the contrapositive. 
Thus let $\calC$ be a monotone graph class which is {\em not} nowhere dense (such a class is said to be {\em somewhere dense}). 
Our aim is to prove that there exist $h \geq 1$ and $\epsilon > 0$ such that there are $n$-element posets of height at most $h$ with dimension $\Omega(n^\epsilon)$ whose cover graphs are in $\calC$. 

Since $\calC$ is somewhere dense, there exists an integer $r\geq 0$ (depending on $\calC$) such that for every $t \geq 1$ there is a graph $G \in \calC$ containing an $\leq\!r$-subdivision of $K_t$ as a subgraph.  
(An {\em $\leq\!k$-subdivision} of a graph is a subdivision such that each edge is subdivided at most $k$ times.) 
Since $\calC$ is closed under taking subgraphs, this means that for every $m \geq 2$, 
the class $\calC$ contains a graph $G_m$ that is an $\leq\!r$-subdivision of the cover graph of the standard example $S_m$. 
Notice that $G_m$ has at most $rm^2+2m$ vertices.
Now it is easy to see that $G_m$ is also the cover graph of a poset $P_m$ of height at most $r+2$ containing $S_m$ as an induced subposet (simply perform the edge subdivisions on the diagram of $S_m$ in the obvious way). 
Let $n$ be the number of elements of $P_m$. 
The poset $P_m$ has dimension at least $m$, and thus its dimension is $\Omega(\sqrt{n})$ since $n \leq rm^2+2m$. 
Hence, we obtain the desired conclusion with $h := r+2$ and $\epsilon := 1/2$. 
This completes the proof of the backward direction of Theorem~\ref{thm:main}.

The non-trivial part of Theorem~\ref{thm:main} is that $n$-element posets of bounded height with cover graphs in a nowhere dense class have dimension $\mathcal{O}(n^\epsilon)$ for all $\epsilon > 0$.  
To prove this, we use the following characterization of nowhere dense classes in terms of weak coloring numbers~\cite{NOdM-nowhere-dense}:  
A class is nowhere dense if and only if for every $r \geq 0$ and every $\epsilon >0$, every $n$-vertex graph in the class has weak $r$-coloring number $\mathcal{O}(n^{\epsilon})$. 
(Note that this characterization does not require that the class is monotone, as deleting edges cannot increase the weak colouring numbers.)

We remark that it is a common feature of several characterizations in the literature that bounded expansion and nowhere dense classes can be characterized using the same graph invariants, but requiring $\mathcal{O}(1)$ and $\mathcal{O}(n^{\epsilon})$ $\forall \epsilon >0$ bounds on the invariants respectively. 
Thus, it is natural to conjecture the statement of Theorem~\ref{thm:main}, and indeed it appears as a conjecture in~\cite{JMW17+}.  
(We note that it was originally Dan Kr\'a{\v l} who suggested to the first author to try and show Theorem~\ref{thm:main} right after the result in~\cite{JMW17+} was obtained.)  

The $4^c$ bound in Theorem~\ref{thm:dim-wcol} unfortunately falls short of implying the forward direction of Theorem~\ref{thm:main}. 
Indeed, if the cover graph $G$ has $n$ vertices and belongs to a nowhere dense class, we only know that $\wcol_{3h-3}(G)\in \mathcal{O}(n^{\epsilon})$ for every $\epsilon > 0$. 
Thus from the theorem we only deduce that $\dim(P) \leq 4^{\mathcal{O}(n^{\epsilon})}$ for every $\epsilon > 0$, which is a vacuous statement since $\dim(P) \leq n$ always holds. 

In order to address this shortcoming, we developed a second upper bound on the dimension of a height-$h$ poset in terms of the weak $w(h)$-coloring number of its cover graph $G$ (for some function $w$) and another invariant of $G$. 
This extra invariant is the smallest integer $t$ such that $G$ does not contain an $\leq\!s(h)$-subdivision of $K_t$ as a subgraph, for some function $s$. 
The key aspect of our bound is that, for fixed $h$ and $t$, it depends polynomially on the weak $w(h)$-coloring number that is being considered.  
Its precise statement is as follows. 
(Let us remark that the particular values $w(h):=4h-4$ and $s(h):=2h-3$ used in the theorem are not important for our purposes, any functions $w$ and $s$ would have been good.)

\begin{theorem}
\label{thm:nowhere-dense-upper-bound} 
There exists a function $f:\N \times \N \to \N$ such that for every $h\geq 1$ and $t\geq 1$, every poset $P$ of height at most $h$ whose cover graph $G$ contains no $\leq\!(2h-3)$-subdivision of $K_t$ as a subgraph satisfies 
\[
\dim(P) \leq (3c)^{f(h,t)}, 
\]
where $c:=\wcol_{4h-4}(G)$.
\end{theorem}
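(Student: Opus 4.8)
I would prove this by refining the argument behind Theorem~\ref{thm:dim-wcol} so that the signature attached to an incomparable pair carries only a \emph{bounded} amount of information --- a sequence of $f(h,t)$ symbols, each a colour in $[c]$ together with a ternary flag --- instead of the triple of unbounded colour-sets $(A,B,C)$ used there. Then $c$ enters only as the alphabet size (giving $3c$ choices per symbol, hence at most $(3c)^{f(h,t)}$ signatures), while the hypothesis that $G$ has no $\le\!(2h-3)$-subdivision of $K_t$ is exactly what keeps the \emph{length} of the signature bounded by a function of $h$ and $t$ alone. This is the mechanism that turns the $4^c$ bound into a bound polynomial in $c$.

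I would start as in the proof of Theorem~\ref{thm:dim-wcol}: fix a linear order $\pi$ on $V(G)$ witnessing $\wcol_{4h-4}(G)\le c$, and greedily colour the elements of $P$ from left to right, giving $x$ the least colour in $[c]$ not used on $\WR_{4h-4}^\pi[x]-\set{x}$. By Observation~\ref{obs-weak-reachability}, any two distinct elements lying in a common set $\WR_{2h-2}^\pi[v]$ then receive distinct colours; the extra budget over $\wcol_{3h-3}$ is there to permit one additional application of Observation~\ref{obs-weak-reachability} in the later analysis. The weakly reachable upsets and downsets $\WU[\cdot],\WD[\cdot]$ and the colour-indexed elements $\wup_i(\cdot),\wdown_i(\cdot)$ are defined exactly as before.

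The new ingredient is to replace the static signature by a \emph{trace}. For each pair $(x,y)\in\Inc(P)$ I would run a deterministic process that tries to reconstruct the initial segment of a hypothetical alternating cycle through $(x,y)$: at each step it picks a canonically determined vertex (for instance the $\le_\pi$-leftmost vertex in the relevant $\WU/\WD$-sets whose colour lies in the current colour-intersection), records that vertex's colour together with a ternary flag ($<_\pi$, $=$, or $>_\pi$) comparing the associated $\wup$- and $\wdown$-witnesses, and then either halts or advances one more step along the reconstructed cycle through a covering chain supplied by an alternating-cycle relation $x'\le y'$. The key claim --- and the only place the excluded subdivided clique is used --- is that this process runs for at most $f(h,t)$ steps: otherwise the canonically chosen vertices would supply $t$ vertices pairwise joined by internally disjoint paths, each a concatenation of two covering chains of length $\le h-1$ sharing an endpoint and hence with at most $2h-3$ internal vertices, i.e.\ a $\le\!(2h-3)$-subdivision of $K_t$ in $G$, contradicting the hypothesis. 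Padding traces to length $f(h,t)$ yields at most $(3c)^{f(h,t)}$ signatures, so it remains to show that the incomparable pairs sharing a fixed signature form a reversible set.

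For that, I would argue by contradiction as in Theorem~\ref{thm:dim-wcol}. An alternating cycle $(x_1,y_1),\dots,(x_k,y_k)$ inside one signature class comes with covering chains $Q_j$ from $x_j$ to $y_{j+1}$; choose them with $q_j=\leftmost(Q_j)$ as far left as possible and $q_1$ globally leftmost. Since $(x_1,y_1)$, $(x_2,y_2)$, $(x_k,y_k)$ share the signature their traces agree colour-by-colour, so one can run them in lock-step; feeding in the relations $x_j\le y_{j+1}$ via Observation~\ref{obs-weak-reachability} along $Q_2$ (resp.\ $Q_k$), chained a bounded number of times, produces a vertex $q^\ast<_\pi q_1$ still witnessing $x_j\le q^\ast\le y_{j+1}$ for a suitable $j$, contradicting the choice of the $Q_j$'s --- this is exactly the two-case ($t\in C$ / $t\notin C$) argument of Theorem~\ref{thm:dim-wcol}, now fed by the bounded trace rather than by the full sets $A,B,C$. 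The main obstacle is the design and analysis of the trace: one must arrange the branching process so that a long run genuinely yields \emph{internally disjoint} covering-chain pairs of controlled length (so that the forbidden subdivision really has parameter $\le 2h-3$), and one must check that the recorded colours and flags still suffice to drive the lock-step argument to its contradiction; getting the first of these to give a clean bound $f(h,t)$ that does not secretly depend on $c$ is the crux, after which the counting and the reversibility proof follow the template already set by Theorem~\ref{thm:dim-wcol}.
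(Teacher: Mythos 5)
Your high-level plan --- turn the signature attached to an incomparable pair into a bounded-length word over an alphabet of size $3c$, with the excluded subdivision bounding the length --- is not how the paper proceeds, and as written it has a genuine gap at its core. The object everything hinges on, the ``trace'', is never constructed: you do not say how the canonical vertex at each step is chosen so that (a) the record is determined by the pair $(x,y)$ alone (it must be, since signatures are assigned before any alternating cycle is known), and (b) a long run really produces $t$ vertices pairwise joined by \emph{internally disjoint} paths of length at most $2h-2$. Point (b) is where the real work lies: in the paper, disjointness is obtained only after extracting an antichain $S$ of size $m=\binom{t}{2}^{h^t}$ with a strong restriction property (Claim~\ref{claim:small-dim}), and then building branch vertices $V$ together with ``private'' roots $R\subseteq S$ satisfying $\D[p]\cap R=\set{r}$ for suitable covers $p$, via a downward walk that controls $\norm{\D[v]\cap R}$ multiplicatively; nothing in your sketch plays this role. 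There is also a concrete obstruction to the per-pair bounded-signature scheme itself: in the $4^c$ argument of Theorem~\ref{thm:dim-wcol} the contradiction is driven by the colour $\phi(q_1)$ of the leftmost bottom of the covering chains of the alternating cycle, a quantity determined by the whole (arbitrarily long) cycle and not by any single pair; the signature $(A,B,C)$ survives precisely because it records the status of \emph{every} colour in $[c]$. A trace of length $f(h,t)$ cannot be guaranteed to contain the one colour that matters, and your ``lock-step'' reversibility argument silently assumes that it does.

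The paper's route is different and avoids this issue. Its Lemma~\ref{lemma:q-support} is the ``one-symbol'' analogue of your idea: the signature $(\tau(y),\alpha(x,y))$ uses the colour of $\leftmost(\D[y])$, which depends on $y$ alone, and its conclusion is not a partition into few reversible classes but the existence of a single element $q$ with $\dim(\set{(x,y)\in I: q\le y \text{ in } P})\ge \dim(I)/c-2$. The exponent $f(h,t)=\binom{m+h}{h}$ then arises not as the length of a per-pair signature but from a potential-function iteration over antichains $S$ (tracked by their height vectors), and the $\le\!(2h-3)$-subdivision is built explicitly at the end from the pair $(V,R)$, with the paths being unions of two covering chains through a root $r_{\{v_1,v_2\}}$ chosen so that $\D[p_1]\cap R=\D[p_2]\cap R=\set{r_{\{v_1,v_2\}}}$ forces internal disjointness. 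To salvage your plan you would need to specify a trace whose entries are functions of the pair alone and prove both the length bound and the reversibility of trace classes; as it stands you have reproduced the counting shell $(3c)^{f(h,t)}$ but not the mechanism that makes it true.
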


Recall that for every nowhere dense graph class $\calC$ and every $r \geq 1$, there exists $t\geq 1$ such that no graph in $G$ contains an $\leq\!r$-subdivision of $K_t$ as a subgraph. 
Hence, Theorem~\ref{thm:nowhere-dense-upper-bound} implies the following corollary. 

\begin{corollary}
\label{cor:nowhere-dense-upper-bound}
For every nowhere dense class of graphs $\calC$, there exists a function $g:\N \to \N$ such that every poset $P$ of height at most $h$ whose cover graph $G$  is in $\calC$ satisfies
\[
\dim(P) \leq (3c)^{g(h)}, 
\]
where $c:=\wcol_{4h-4}(G)$. 
\end{corollary}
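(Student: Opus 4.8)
The plan is to derive Corollary~\ref{cor:nowhere-dense-upper-bound} as an immediate consequence of Theorem~\ref{thm:nowhere-dense-upper-bound} by exploiting the defining property of nowhere dense classes to select, for each height $h$, a suitable threshold $t$ that depends only on $h$ (not on the individual poset). First I would invoke the definition of nowhere denseness for the class $\calC$: for every $r\geq 1$ there is an integer $t=t(r)\geq 1$ such that no graph in $\calC$ contains an $\leq\!r$-subdivision of $K_t$ as a subgraph. Applying this with $r:=2h-3$ when $h\geq 2$ (and noting $2h-3$ is then at least $1$), we obtain an integer $t_h:=t(2h-3)$ depending only on $h$ such that no graph in $\calC$ contains an $\leq\!(2h-3)$-subdivision of $K_{t_h}$.

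Next I would handle the degenerate small-height cases separately so that the statement is uniform over all $h\geq 1$. For $h=1$ the poset is an antichain and $\dim(P)\leq 2$, so the bound holds trivially for any reasonable choice of $g(1)$. For $h\geq 2$ we have a genuine subdivision bound: let $P$ be any poset of height at most $h$ whose cover graph $G$ lies in $\calC$; since $G\in\calC$, it contains no $\leq\!(2h-3)$-subdivision of $K_{t_h}$, so $P$ and $G$ satisfy the hypotheses of Theorem~\ref{thm:nowhere-dense-upper-bound} with the parameter pair $(h,t_h)$. That theorem then yields
\[
\dim(P)\leq (3c)^{f(h,t_h)},\qquad c:=\wcol_{4h-4}(G),
\]
where $f$ is the universal function provided by Theorem~\ref{thm:nowhere-dense-upper-bound}. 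Since $t_h$ is a function of $h$ alone, the exponent $f(h,t_h)$ is also a function of $h$ alone.

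Finally I would set $g:\N\to\N$ by $g(h):=f(h,t_h)$ for $h\geq 2$ and $g(1):=1$ (or any constant large enough to absorb the trivial bound for antichains, e.g.\ using $3c\geq 3$ always). This $g$ depends only on the class $\calC$ through the numbers $t_h$, exactly as required, and with this choice every poset $P$ of height at most $h$ with cover graph in $\calC$ satisfies $\dim(P)\leq (3c)^{g(h)}$ with $c=\wcol_{4h-4}(G)$, completing the proof. There is essentially no obstacle here: the entire content is packaged in Theorem~\ref{thm:nowhere-dense-upper-bound}, and the only point requiring the slightest care is checking the edge cases $h\in\{1,2\}$ (where $2h-3\in\{-1,1\}$) so that the subdivision-exclusion parameter is meaningful and the trivial dimension bound for antichains is subsumed.
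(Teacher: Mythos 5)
Your proposal is correct and matches the paper's own (one-line) derivation: the paper likewise fixes, for each $h$, an integer $t$ such that no graph in $\calC$ contains a $\leq\!(2h-3)$-subdivision of $K_t$ and sets $g(h):=f(h,t)$ via Theorem~\ref{thm:nowhere-dense-upper-bound}. Your extra care with the edge cases $h\in\{1,2\}$ is harmless but not needed, since Theorem~\ref{thm:nowhere-dense-upper-bound} is stated for all $h\geq 1$.
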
 

For every integer $h\geq 1$ and real number $\epsilon >0$, this in turn gives a bound of $\mathcal{O}(n^\epsilon)$ on the dimension of $n$-element posets of height at most $h$ whose cover graphs $G$ are in $\calC$. 
Indeed, if we take $\epsilon':= \epsilon / g(h)$, then $\wcol_{4h-4}(G) \in \mathcal{O}(n^{\epsilon'})$ by the aforementioned characterization of nowhere dense classes~\cite{NOdM-nowhere-dense}, and hence $\dim(P)\in \mathcal{O}(n^{g(h)\epsilon'})=\mathcal{O}(n^{\epsilon})$ by the corollary. 
Therefore, this establishes the forward direction of Theorem~\ref{thm:main}.  

Let us also point out that Corollary~\ref{cor:nowhere-dense-upper-bound} provides another proof of the theorem from~\cite{JMW17+} for classes with bounded expansion, since $\wcol_{4h-4}(G)$ is bounded by a function of $h$ only when $\calC$ has bounded expansion. 
However, the proof is more involved than that of Theorem~\ref{thm:dim-wcol} and the resulting bound on the dimension is typically larger. 
Indeed, the bound in Theorem~\ref{thm:nowhere-dense-upper-bound} becomes interesting when the weak coloring number under consideration grows with the number of vertices. 

Our proof of Theorem~\ref{thm:nowhere-dense-upper-bound} takes its roots in the alternative proof due to Micek and Wiechert~\cite{MW15} of Walczak's theorem~\cite{Walczak17}, that bounded-height posets whose cover graphs exclude $K_t$ as a topological minor have bounded dimension. 
This proof is essentially an iterative algorithm which, if the dimension is large enough (as a function of the height), explicitly builds a subdivision of $K_t$, one branch vertex at a time. 
This is very similar in appearance to what we would like to show, namely that if the dimension is too big, then the cover graph contains a subdivision of $K_t$ where each edge is subdivided a bounded number of times (by a function of the height). 
The heart of our proof is a new technique based on weak coloring numbers, Lemma~\ref{lemma:q-support}, which we use to bound the number of subdivision vertices.

The paper is organized as follows. 
We prove Theorem~\ref{thm:nowhere-dense-upper-bound} in Section~\ref{sec:proof_nowhere_dense}. 
Next, we discuss in Section~\ref{sec:applications} improved bounds implied by Theorem~\ref{thm:dim-wcol} for special cases that were studied in the literature, such as for posets with planar cover graphs and posets with cover graphs of bounded treewidth. 
Finally, we close the paper in Section~\ref{sec:open_problems} with a couple open problems.

\section{Nowhere Dense Classes} 
\label{sec:proof_nowhere_dense}

As discussed in the introduction, the forward direction of Theorem~\ref{thm:main} follows from Theorem~\ref{thm:nowhere-dense-upper-bound} combined with Zhu's characterization of nowhere dense classes.  
In this section we prove Theorem~\ref{thm:nowhere-dense-upper-bound}. 
We begin with our key lemma. 

\begin{lemma}\label{lemma:q-support}
 Let $P$ be a poset of height $h$ with cover graph $G$, let $I \subseteq \Inc(P)$, and let $c:= \wcol_{4h-4}(G)$.  
 Then there exists an element $q\in P$ such that the set 
 $I':= \{(x,y)\in I: q \leq y \text{ in } P\}$ satisfies  
 $$
 \dim(I')\geq \dim(I) / c - 2.
 $$
\end{lemma}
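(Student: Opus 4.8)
The plan is to mimic the proof of Theorem~\ref{thm:dim-wcol}. Fix a linear order $\pi$ on $V(G)$ with $|\WR_{4h-4}^\pi[v]|\le c$ for every $v$, and interpret all weak reachability with respect to $\pi$ and the cover graph $G$. Greedily colour the elements of $P$ from left to right, assigning to $x$ the least colour $\phi(x)\in[c]$ not appearing on $\WR_{4h-4}^\pi[x]\setminus\{x\}$. Exactly as in the derivation of~\eqref{eq:unique-color}, Observation~\ref{obs-weak-reachability} (applied with $m=0$) shows that any two distinct elements both lying in some $\WR_{2h-2}^\pi[v]$ receive different colours; in particular the (at most $c$) elements of $\WD[y]$ get pairwise distinct colours, for every $y\in P$.

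For $(x,y)\in I$, let $q(y)$ be the $\pi$-leftmost element of the downset $\D[y]$. Since every element of $\D[y]$ lies on a covering chain from a minimal element of $P$ up to $y$, one checks that $q(y)\in\WD[y]$ (hence $q(y)\in\WR_{h-1}^\pi[y]$ and $q(y)\le y$ in $P$), and in fact that $\WD[q(y)]=\{q(y)\}$. Now partition $I$ into classes according to the colour $\phi(q(y))$. There are at most $c$ classes, so by subadditivity of dimension some class $I^\star$ satisfies $\dim(I^\star)\ge\dim(I)/c$; fix such a class, and let $j$ be the common value of $\phi(q(y))$ over $(x,y)\in I^\star$.

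The crux is to find, inside $I^\star$, a single element lying below the second coordinate of almost every pair of $I^\star$. Concretely, I would prove: there is a pair $(x^\star,y^\star)\in I^\star$ such that, setting $q:=q(y^\star)$, the set $J:=\{(x,y)\in I^\star : q\not\le y\text{ in }P\}$ satisfies $\dim(J)\le 2$. Granting this, write $I':=\{(x,y)\in I^\star : q\le y\text{ in }P\}$; subadditivity gives $\dim(I')\ge\dim(I^\star)-\dim(J)\ge\dim(I)/c-2$, and since $I'\subseteq\{(x,y)\in I:q\le y\}$ the element $q$ witnesses the lemma. To prove the crux I would choose $(x^\star,y^\star)$ so that $q(y^\star)$ is $\pi$-leftmost among all $q(y)$ with $(x,y)\in I^\star$, and argue that the pairs of $J$ contain no alternating cycle (possibly after discarding one reversible layer handling a boundary case, which is what costs the extra $2$). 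The engine is that of Theorem~\ref{thm:dim-wcol}: in an alternating cycle $(x_1,y_1),\dots,(x_k,y_k)\subseteq J$, each comparability $x_i\le y_{i+1}$ yields a covering chain of length $\le h-1$ from $x_i$ to $y_{i+1}$; concatenating it with the covering chain of length $\le h-1$ witnessing $q(y_{i+1})\in\WD[y_{i+1}]$ and the one witnessing $q(y_i)\in\WD[y_i]$, and applying Observation~\ref{obs-weak-reachability} (the $\pi$-leftmost choice of $q$ being what makes the hypothesis on leftmost vertices hold), one obtains that some $q(y_i)$ is weakly $(4h-4)$-reachable from $q$ or vice versa. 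Since $q$ and all the $q(y_i)$ have colour $j$ and the colouring separates elements co-reachable inside $\WR_{4h-4}^\pi$, this forces $q(y_i)=q$ for that $i$, hence $q\le y_i$ in $P$ — contradicting $(x_i,y_i)\in J$.

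I expect this last step to be the main obstacle. The danger is that the elements $y$ occurring in $I^\star$ need not be close to one another in $G$, so the weak colouring bound does not by itself relate their roots $q(y)$; the only route connecting them passes through the comparabilities $x_i\le y_{i+1}$ of the alternating cycle, and one must verify that the walk so built is short enough (length at most $4h-4$, which is precisely why $c=\wcol_{4h-4}(G)$ is the right parameter) and that its leftmost vertex lies far enough to the right. This is where the extremal choice of $(x^\star,y^\star)$ and the precise definition of $q(y)$ as the $\pi$-leftmost element of $\D[y]$ really matter. Pinning down the exact routing of these walks, the length bookkeeping, and the boundary case absorbed by the additive $2$ is the substance of the argument; the greedy colouring, the partition into $c$ colour classes, and the subadditivity of dimension are routine.
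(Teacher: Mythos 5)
Your setup (the optimal order $\pi$, the greedy colouring, the root $q(y):=\leftmost(\D[y])$, and the split of $I$ into at most $c$ classes by the colour of the root) coincides with the paper's, but the crux claim everything hinges on is false: one cannot in general find $(x^\star,y^\star)\in I^\star$ with $\dim\bigl(\{(x,y)\in I^\star: q(y^\star)\not\le y\}\bigr)\le 2$. The set $J$ you want to discard contains \emph{all} pairs of $I^\star$ whose roots are other elements of the same colour, and these can carry large dimension. Concretely, let $P$ be a disjoint union of many copies of a fixed poset whose designated pairs require large dimension $K$ (copies of the construction of Theorem~\ref{thm:lb-tw-construct} keep $c$ small while $K\gg 3c$, so this is in the non-trivial regime), and let $I$ collect these pairs from every copy. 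Pairs from different copies never form alternating cycles, so $\dim(I)=K$, while $c$ is just the per-copy value. Within each copy, subadditivity over root colours gives some colour to which that copy contributes dimension at least $K/c\ge 3$; with more than $c$ copies, two copies share such a colour, and the corresponding class $I^\star$ satisfies $\dim(I^\star)\ge\dim(I)/c$. But any candidate $q=q(y^\star)$ lies in a single copy and is below nothing outside it, so $J$ retains all of $I^\star$'s pairs from at least one of those two copies, whence $\dim(J)\ge 3$ for \emph{every} choice of $(x^\star,y^\star)$. The same picture shows why your proposed engine cannot be repaired: Observation~\ref{obs-weak-reachability} needs an actual short path in $G$ between the two anchors, and nothing connects an alternating cycle inside $J$ to $y^\star$ --- here they lie in different connected components --- so no extremal choice of $(x^\star,y^\star)$, and no bookkeeping of lengths, produces the required walk.

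The paper's proof avoids bounding the complement of a single root class altogether. After fixing the root colour $\tau$, it refines the class by comparing $\w_{\tau}(x)$ with $\w_{\tau}(y)$ (three cases); the two ``strict'' cases are each reversible, and discarding them is exactly what costs the additive $2$. For the remaining case it proves that an alternating cycle can never mix pairs with different roots, because $x_i\le y_{i+1}$ forces $\w_\tau(x_i)=\w_\tau(y_{i+1})$ while the retained signature forces $\w_\tau(x_i)=\w_\tau(y_i)$; consequently reversible partitions of the different root classes can be superimposed, and the dimension of their union equals the \emph{maximum} over roots, not the sum (Claim~\ref{claim-max}). Choosing $q$ to realize that maximum gives the lemma. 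This ``max instead of sum'' statement is the key idea missing from your proposal, and it is genuinely weaker than --- and not obtainable from --- the bound $\dim(J)\le 2$ you aim for, which my example refutes.
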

\begin{proof}
Fix a linear order $\pi$ of the vertices of $G$ witnessing $\wcol_{4h-4}(G)\leq c$.
Here and in the rest of the proof, weak reachability is to be interpreted w.r.t.\ the cover graph $G$ and the ordering $\pi$. 

Let $\phi$ be a greedy vertex coloring of $G$ obtained by considering the vertices one by one according to $\pi$, and assigning to each vertex $z$ a color $\phi(z)\in [c]$ different from all the colors used on vertices in $\WR_{4h-4}^\pi[z]-\{z\}$.
Note that for every two vertices $x,y \in \WR_{2h-2}^\pi[z]$,  we know from Observation~\ref{obs-weak-reachability} that one of $x,y$ is weakly  $(4h-4)$-reachable from the other, and thus $\phi(x)\neq \phi(y)$.

For each $z\in P$, let $\tau(z)\in[c]$ denote the color of $\leftmost(\D[z])$. 
Given a color $i\in[c]$, let $\w_i(z)$ denote the unique element of $\WR_{2h-2}^\pi[z]$ colored $i$ if there is one, and leave $\w_i(z)$ undefined otherwise. 
Observe that $\w_{\tau(z)}(z) = \leftmost(\D[z])$.  
In particular, $\w_{\tau(z)}(z)\leq z$ in $P$.

Let $x,y\in P$ with $x\leq y$ in $P$. 
We claim that $\w_{\tau(y)}(x)=\w_{\tau(y)}(y)$. 
Indeed, $x$ and $y$ are at distance at most $h-1$ in $G$, and the element $\w_{\tau(y)}(x)$ is weakly $(2h-2)$-reachable from $x$ and $\w_{\tau(y)}(y)$ is weakly $(h-1)$-reachable from $y$. 
Hence, one of $\w_{\tau(y)}(x),\w_{\tau(y)}(y)$ is $(4h-4)$-reachable from the other by  Observation~\ref{obs-weak-reachability}, and $\w_{\tau(y)}(x)=\w_{\tau(y)}(y)$.

Define the \emph{signature} $\sigma(x,y)$ of a pair $(x,y) \in I$ to be the pair $(\tau(y),\alpha(x,y))$, where 
 \[
  \alpha(x,y):=\begin{cases}
               1 & \quad \text{if } \w_{\tau(y)}(x)=\w_{\tau(y)}(y)\\
               2 & \quad \text{if } \w_{\tau(y)}(x)<_\pi \w_{\tau(y)}(y)\\
               3 & \quad \text{if } \w_{\tau(y)}(x)>_\pi \w_{\tau(y)}(y) \text{ or } \w_{\tau(y)}(x) \text{ is not defined}.\\
              \end{cases}
 \]
 
 For each color $\tau\in[c]$ and value $\alpha\in[3]$, let $J_{\tau,\alpha}$ be the set of incomparable pairs $(x,y)\in I$ such that $\sigma(x,y)=(\tau,\alpha)$. Note that the sets $J_{\tau,\alpha}$ form a partition of $I$.
 \begin{claim}
  For each color $\tau\in[c]$, the sets $J_{\tau,2}$ and $J_{\tau,3}$ are reversible. 
 \end{claim}
 \begin{proof}
 Let $\alpha\in \{2,3\}$. 
 Arguing by contradiction, suppose that $J_{\tau,\alpha}$ is not reversible, and let $(x_1,y_1),\ldots,(x_k,y_k)$ denote an alternating cycle. 
  Since $x_1 \leq y_2$ in $P$, we have that $\w_{\tau(y_2)}(x_1) = \w_{\tau(y_2)}(y_2)$. 
  Since $\tau(y_2)=\tau=\tau(y_1)$, it follows that $\w_{\tau(y_1)}(x_1)$ is defined. 
 
 Since for every $i\in[k]$ we have $x_i\leq y_{i+1}$ in $P$ (cyclically), we obtain that $\w_{\tau}(x_i)=\w_{\tau}(y_{i+1})$.
 However, by our signature function this implies $\w_{\tau}(y_{i+1})= \w_{\tau}(x_i)<_\pi \w_{\tau}(y_i)$ for all $i\in[k]$ if $\alpha=2$, or $\w_{\tau}(y_{i+1})= \w_{\tau}(x_i)>_\pi \w_{\tau}(y_i)$ for all $i\in[k]$ if $\alpha=3$,  which  cannot hold cyclically.
 \end{proof}
 
 Since
 \[
 I = \bigcup_{\tau\in [c], \alpha \in [3]} J_{\tau,\alpha}
 \]
 the previous claims imply that
 \begin{align*}
  \dim(I) & \leq \sum_{\tau\in[c]}\dim(J_{\tau,1})+\sum_{\tau\in[c]}\dim(J_{\tau,2})+\sum_{\tau\in[c]}\dim(J_{\tau,3})\\
  & \leq \sum_{\tau\in[c]}\dim(J_{\tau,1}) + 2c.
 \end{align*}
 It follows that there exists a color $\tau\in[c]$ such that
 $ \dim(J_{\tau,1})\geq \dim(I)/c-2$.
 In the rest of the proof we focus on the set $J_{\tau,1}$.  Thus, denoting this set  by $I_\tau$, we have
  \begin{equation*}
  \dim(I_\tau)\geq \dim(I)/c-2.
 \end{equation*}

Given an element $p\in P$, we denote by $I_{\tau,p}$  the set of incomparable pairs $(x,y)\in I_\tau$ such that $p=\w_{\tau}(x)=\w_{\tau}(y)$. 
 Note that the sets $I_{\tau,p}$ ($p\in P$) partition $I_\tau$.

\begin{claim}\label{claim-max}
$\displaystyle{\dim(I_\tau)=\max_{p\in P} \ \dim(I_{\tau,p})}$.
\end{claim}
\begin{proof}
Let $d := \max_{p\in P} \ \dim(I_{\tau,p})$. 
Note that $\dim(I_\tau) \geq d$ since $I_{\tau,p} \subseteq I_\tau$ for each $p\in P$. 
Thus it remains to show that $\dim(I_\tau) \leq d$. 

For each $p\in P$, there exists a partition of $I_{\tau,p}$ into at most $d$ reversible sets. 
Let $I^1_{\tau,p}, \dots, I^d_{\tau,p}$ be disjoint reversible sets such that  
\[
I_{\tau,p}=\bigcup_{j\in [d]}I^j_{\tau,p}, 
\]
some sets being possibly empty.
We claim that the set $\bigcup_{p\in P}I^j_{\tau,p}$ is reversible for each $j\in[d]$.
Arguing by contradiction, suppose that for some $j\in[d]$ this set is not reversible.
Then it contains an alternating cycle $(x_1,y_1),\ldots,(x_k,y_k)$.
As $x_i\leq y_{i+1}$ in $P$ for $i\in[k]$, we have $\w_{\tau}(x_i)=\w_{\tau}(y_{i+1})$, which by the signatures of these pairs implies that $\w_{\tau}(y_i)=\w_{\tau}(y_{i+1})$.
As this holds cyclically, there is $p\in P$ such that $p=\w_{\tau}(y_i)$ for every $i\in [k]$.
However, this implies that $(x_1,y_1),\ldots,(x_k,y_k)$ is an alternating cycle in $I^j_{\tau,p}$, 
which is a contradiction since this set is reversible by assumption. 

Thus, $\bigcup_{p\in P}I^j_{\tau,p}$ is reversible for each $j\in[d]$.  
Since $I_\tau= \bigcup_{j\in [d]}\bigcup_{p\in P}I^j_{\tau,p}$, it follows that $\dim(I_\tau) \leq d$, as desired. 
\end{proof}
Now we can complete the proof of the lemma.
Let $q\in P$ be an element witnessing the maximum value in the right-hand side of the equation in Claim~\ref{claim-max}.
Clearly, $I_{\tau,q}\subset\set{(x,y)\in I: q\leq y\text{ in $P$}}$.
Since 
\[
\dim(I_{\tau,q})= \dim(I_{\tau})\geq\dim(I)/c-2, 
\]
this completes the proof of the lemma.
\end{proof}

We are now ready to prove Theorem~\ref{thm:nowhere-dense-upper-bound}. 
 
\begin{proof}[Proof of Theorem~\ref{thm:nowhere-dense-upper-bound}] 
Let $h \geq 1$ and $t \geq 1$. 
We prove the theorem with the following value for $f(h,t)$:  
\[
 f(h,t):=\binom{m+h}{h}, \quad\text{ where } m:=\binom{t}{2}^{h^t}.
\]
Let thus $P$ be a poset of height at most $h$, let $G$ denote its cover graph, and let $c:=\wcol_{4h-4}(G)$. 
We prove the contrapositive. 
That is, we assume that 
\[
\dim(P)> (3c)^{f(h,t)},
\] 
and our goal is to show that $G$ contains a $\leq\!(2h-3)$-subdivision of $K_t$ as a subgraph. 
For technical reasons, we will need to suppose also that $c > t$. 
This can be assumed without loss of generality, because if not then $\wcol_{3h-3}(G) \leq \wcol_{4h-4}(G) \leq t$, and hence  $\dim(P) \leq 4^{t} \leq (3c)^{f(h,t)}$ by Theorem~\ref{thm:dim-wcol}.\footnote{The reader might object that this makes the proof dependent on Theorem~\ref{thm:dim-wcol}, while we claimed in the introduction that it was not.    
In order to address this perfectly valid point, let us mention that one could  choose instead to add the extra assumption that $c > t$ in the statement of Theorem~\ref{thm:nowhere-dense-upper-bound}; this does not change the fact that it implies the forward direction of Theorem~\ref{thm:main} (in combination with Zhu's theorem). 
However, it seemed rather artificial to do so, since the theorem remains true without this technical assumption.}

\begin{claim}
\label{claim:small-dim}
There exists an antichain $S$ of size $m$ in $P$ such that, letting  $I:=\{(x,y) \in \Inc(P): s \leq y \; \forall s\in S\}$, we have $\dim(I) \geq 2$, and
\[
\dim(\set{(x,y)\in I: q\leq y \text{ in $P$}}) < \dim(I)/3c
\]
for every element $q$ such that there exists $s\in S$ with $s < q$ in $P$.
\end{claim}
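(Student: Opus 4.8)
The plan is to build the antichain $S$ greedily, one element at a time, by repeatedly applying the key lemma (Lemma~\ref{lemma:q-support}) to peel off an element $q$ whose upset still carries a large fraction of the dimension, and to stop as soon as no such peeling is possible with the required ratio. Concretely, I would maintain a set $I_i \subseteq \Inc(P)$ and a set $S_i$ of elements, starting from $I_0 := \Inc(P)$ and $S_0 := \emptyset$. At step $i$, if every element $q$ with $q$ strictly above some element of $S_i$ already satisfies $\dim(\{(x,y) \in I_i : q \le y\}) < \dim(I_i)/3c$ and $\dim(I_i) \ge 2$, then we stop and output $S := S_i$ and $I := I_i$. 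Otherwise, apply Lemma~\ref{lemma:q-support} to $I_i$ to obtain an element $q_{i+1} \in P$ with $\dim(I'_{i+1}) \ge \dim(I_i)/c - 2 \ge \dim(I_i)/(3c)$ (using $\dim(I_i)$ large), where $I'_{i+1} := \{(x,y) \in I_i : q_{i+1} \le y\}$; set $S_{i+1} := S_i \cup \{q_{i+1}\}$ and $I_{i+1} := I'_{i+1}$.

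Two things must be checked along the way. First, $S_i$ stays an antichain: each new element $q_{i+1}$ satisfies $q_{i+1} \le y$ for all $(x,y) \in I_{i+1}$, and since $I_{i+1} \subseteq I_i$, every previously chosen element is also $\le y$ for all such $y$; moreover — and this is the point where the termination condition of the previous step is used — $q_{i+1}$ is not strictly above any element of $S_i$, and by construction of the $I_i$'s it cannot be strictly below one either (if $s < q_{i+1}$ for $s \in S_i$ then $s \le q_{i+1} \le y$ would place $q_{i+1}$ in a position already excluded, or more directly: the peeling would have been blocked), so $q_{i+1}$ is incomparable to all of $S_i$. Second, the dimension bookkeeping: after $i$ peeling steps, $\dim(I_i) \ge \dim(\Inc(P)) / (3c)^i = \dim(P)/(3c)^i$. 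Since we assumed $\dim(P) > (3c)^{f(h,t)}$, the process can run for at least $f(h,t)$ steps while keeping $\dim(I_i) > 1$, i.e.\ $\dim(I_i) \ge 2$. Because $P$ has height at most $h$ and $S_i$ is an antichain all of whose elements lie below a common set of maximal elements (namely the $y$'s appearing in $I_i$), the greedy process cannot run forever: the natural obstruction is that a chain of length exceeding $h$ cannot appear, but here the relevant bound is different — we want $|S| = m$ exactly. So I would run the process for exactly $m$ steps (possible since $m \le f(h,t)$, as $f(h,t) = \binom{m+h}{h} \ge m$), obtaining $|S_m| = m$; the termination condition at the final step gives precisely the stated inequality for all $q$ strictly above some $s \in S$.

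The main obstacle I anticipate is not the dimension arithmetic — that is a clean geometric-decay argument — but rather ensuring that the stopping/termination condition yields exactly the quantified statement over \emph{all} $q$ with $s < q$ for some $s \in S$, rather than just the particular $q_{i+1}$ that the lemma hands us. The subtlety is that Lemma~\ref{lemma:q-support} only promises \emph{some} good $q$, so to conclude that \emph{every} $q$ above $S$ is bad at the stopping time, I must argue that if some such $q$ were still good, we could have continued — which means the process only stops when genuinely stuck, and since $\dim(P)$ is large enough it is not stuck until we have collected $m$ antichain elements. Making this "continue or the claim holds" dichotomy precise, and verifying that the element produced by a continuation step is genuinely incomparable to everything chosen so far (so that $S$ remains an antichain of the right size), is the delicate part; the rest is routine.
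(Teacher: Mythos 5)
There is a genuine gap, and it sits exactly at the point you flag as ``the delicate part'': your iterative scheme has no mechanism for keeping $S_i$ an antichain, and no mechanism for growing it once the stopping condition holds. Lemma~\ref{lemma:q-support} hands you \emph{some} element $q_{i+1}$ with no control over its relation to the previously chosen elements; in particular it may lie strictly above (or below) an element of $S_i$, and your argument that this cannot happen does not work --- not stopping only means that \emph{some} good $q$ above $S_i$ exists, it says nothing about the particular $q_{i+1}$ the lemma returns, and the elements of $S_i$ being below every $y$ occurring in $I_{i+1}$ does not make them incomparable to $q_{i+1}$. The paper resolves this with two ingredients you are missing. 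First, when a good $q$ above $S$ exists, it does not add $q$ to $S$ but performs the replacement $S' := (S - \D[q])\cup\{q\}$, discarding the elements of $S$ dominated by $q$; progress is then measured not by $|S|$ but by a potential, namely the lexicographic rank of the non-increasing vector of heights of $S$ (padded to length $m$), which strictly increases under this replacement. This is precisely why the hypothesis is $\dim(P) > (3c)^{f(h,t)}$ with $f(h,t)=\binom{m+h}{h}$, the number of possible height vectors, rather than the $(3c)^{m}$ your ``$m$ peeling steps'' bookkeeping would suggest: the antichain can be rebuilt many times, each rebuild costing a factor $3c$ in dimension, and only the potential bounds the number of rebuilds. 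Second, to enlarge $S$ when $|S|<m$ and no good $q$ above $S$ exists, the paper applies Lemma~\ref{lemma:q-support} not to $P$ but to the induced subposet $Q := P - \D[S]$ (after checking that $I - (I\cap\Inc(Q))$ is reversible, so only one unit of dimension is lost); working in $Q$ guarantees the new element is not below any element of $S$, and the same replacement $S':=(S-\D[q])\cup\{q\}$ then yields an antichain with larger potential. Without these two devices your process can neither certify that $S$ stays an antichain nor reach $|S|=m$: if it stops early you have no way to add an $m$-th element while preserving both the antichain property and the dimension lower bound, and if it continues you may destroy the antichain.

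A cleaner way to see the structural difference: the paper's proof is a single extremal choice (take an antichain $S$ with $|S|\le m$ maximizing the height-vector potential subject to $\dim(I) > (3c)^{f(h,t)-\val(S)}$), and both conclusions of the claim --- the non-existence of a good $q$ above $S$ and the equality $|S|=m$ --- are derived by exhibiting a better candidate $S'$ if either failed. Your dimension arithmetic ($\dim(I)/c - 2 \ge \dim(I)/3c$ once $\dim(I)\ge 3c$, giving geometric decay by a factor $3c$ per improvement) matches the paper's, but the combinatorial engine that makes the induction close --- the downset-pruning replacement, the height-vector potential, and the passage to $Q=P-\D[S]$ --- is absent, and it is exactly what the claim needs.
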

\begin{proof}
We define the \emph{height vector} of an antichain $S$ of size at most $m$ in $P$ to be the vector of heights of elements in $S$ ordered in non-increasing order and padded at the end with $0$-entries so that the vector is of size exactly $m$.
Note that $\binom{m+h}{h}$ is the number of size-$m$ vectors with entries in $\{0,1, \dots, h\}$ ordered in non-increasing order.
We enumerate these vectors in lexicographic order with numbers from $0$ to $\binom{m+h}{h}-1 = f(h,t)-1$.
Let $\val(S)$ denote the index of the height vector of $S$ in this enumeration. 
Notice that $\val(\emptyset)=0$. 

To prove the lemma, choose an antichain $S$ with $|S| \leq m$ such that 
\[
\dim(I)>d:=(3c)^{f(h,t)-\val(S)}, 
\]
where $I:=\{(x,y) \in \Inc(P): s \leq y \; \forall s\in S\}$, and with $\val(S)$ maximum.  
Note that $S$ is well defined as $S=\emptyset$ is a candidate. 
Note also that $\dim(I) > d \geq 3c \geq 2$. 
We will show that $S$ and $I$ satisfy the lemma. 

First assume that $q \in P$ is such that there exists $s\in S$ with $s < q$ in $P$ and 
$\dim(I') \geq \dim(I)/3c$, where $I':= \set{(x,y)\in I: q\leq y \text{ in $P$}}$. 
Let $S' := (S - \D[q])\cup\set{q}$. 
Observe that $\val(S') > \val(S)$ and $|S'| \leq |S| \leq m$, since $\D[q] \cap S \neq \emptyset$ and the height of $q$ is strictly larger than the heights of all the elements in $\D[q] \cap S$. 
Moreover, 
\[
 \dim(I') \geq \dim(I)/3c 
 > d/3c 
 = (3c)^{f(h,t)-\val(S)-1}
 \geq (3c)^{f(h,t)-\val(S')},  
\]
showing that $S'$ was a better choice than $S$, a contradiction. 
Hence, there is no such element $q$, and it only remains to show that $|S|=m$. 
Arguing by contradiction, suppose $|S| < m$.  

Consider the poset  $Q:= P - \D[S]$, and let $I_Q := I \cap \Inc(Q)$. 
First, we claim that $I - I_Q$ is reversible in $P$. 
Arguing by contradiction, suppose that this set contains an alternating cycle $(x_1,y_1), \dots, (x_k, y_k)$. 
Since $Q$ is an induced subposet of $P$, for each $i\in [k]$, at least one $x_i$ and $y_i$ must be in $\D[S]$ (otherwise, $(x_i,y_i)$ would be an incomparable pair of $Q$). 
We cannot have $x_i \in \D[S]$, because otherwise $x_i \leq s$ in $P$ for some $s \in S$, and since $s \leq y_i$ in $P$ this would contradict the fact that $x_i$ and $y_i$ are incomparable. 
Thus, $y_i \in \D[S]$. 
However, since $x_{i-1} \leq y_i$ in $P$ (taking indices cyclically), it follows that  $x_{i-1} \in \D[S]$, a contradiction.  
Hence, $I-I_Q$ is reversible, as claimed. 
It follows 
\begin{equation}
\dim(I_Q) \geq \dim(I) - 1. 
\end{equation}
Applying Lemma~\ref{lemma:q-support} on poset $Q$ and set $I_Q$, we obtain an element $q\in Q$ such that $\dim_Q\left( \{(x,y)\in I_Q: q \leq y \text{ in } Q\}\right) \geq \dim_Q(I_Q) / c - 2$. 
(The subscript $Q$ indicates that dimension is computed w.r.t.\ $Q$.) 
Here we use that $Q$ has height at most that of $P$, and thus at most $h$, and that the cover graph $G_Q$ of $Q$ is an (induced) subgraph of $G$ (since $Q$ is an upset of $P$), and thus $\wcol_{4h}(G_Q) \leq \wcol_{4h}(G)\leq c$. 
It only remains to point out that $\dim_Q(I_Q)=\dim(I_Q)$ because $Q$ is an induced subposet of $P$ (that is, a subset of $I_Q$ is an alternating cycle in $Q$ if and only if it is one in $P$), and similarly $\dim_Q\left( \{(x,y)\in I_Q: q \leq y \text{ in } Q\}\right) = \dim\left( \{(x,y)\in I_Q: q \leq y \text{ in } P\}\right)$. 
Putting everything together, we obtain
\begin{align*}
\dim\left( \{(x,y)\in I_Q: q \leq y \text{ in } P\}\right) 
&\geq  \dim(I_Q) / c - 2 \\
&\geq (\dim(I) - 1)/c - 2 \\
&\geq \dim(I)/c - 3. 
\end{align*}

Now, let $S' := (S - \D[q])\cup\set{q}$ and $I':= \set{(x,y)\in I: q\leq y \text{ in $P$}}$. 
Observe that $\val(S') > \val(S)$ and $|S'| \leq |S| +1 \leq m$. 
Moreover, 
\begin{align*}
 \dim(I') &\geq \dim(\set{(x,y)\in I_Q : q\leq y \text{ in $P$}}) \\
 &\geq \dim(I)/c - 3 \\
 &> (3c)^{f(h,t)-\val(S)}/c-3\\
 &\geq 3\cdot (3c)^{f(h,t)-\val(S)-1}-3\\
 &\geq (3c)^{f(h,t)-\val(S')}. 
\end{align*}
(For the last inequality we use that $\val(S) < \val(S') < f(h,t)$.)
This shows that $S'$ is a better choice than $S$, a contradiction. 
\end{proof}

Let $S$ denote an antichain given by Claim~\ref{claim:small-dim}, and let $I$ denote the corresponding set of incomparable pairs. 
The next claim will be used to build the desired subdivision of $K_t$, the set $V$ will be the set of branch vertices. 

\begin{claim} 
There exist disjoint sets $V\subset P$ and $R\subset S$ such that 
\begin{enumerate2}
  \item\label{inv:V-and-R-sizes} $\norm{V}=t$ and $|R|\geq m^{h^{-t}} = \binom{t}{2}$,
  \item\label{inv:clean-branching-for-V} for all $(v,r)\in V \times R$, there is $p\in P$ such that $v$ covers $p$ in $P$ and $\D[p]\cap R=\set{r}$.
 \end{enumerate2}
\end{claim}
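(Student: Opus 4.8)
The plan is to extract the sets $V$ and $R$ from the antichain $S$ by an iterative "branch vertex harvesting" procedure, driven by the dimension drop guaranteed in Claim~\ref{claim:small-dim}. The starting point is that $\dim(I) \geq 2$, so $I$ contains an alternating cycle, and in particular a nontrivial incomparable pair $(x,y)\in I$ with (say) $y$ strictly above all of $S$. The key structural fact to exploit is the third bullet of Claim~\ref{claim:small-dim}: for any $q$ with $s<q$ in $P$ for some $s\in S$, passing from $I$ to $\{(x,y)\in I : q\le y\}$ drops the dimension by a factor of at least $3c$. Since the height is at most $h$ and the original dimension exceeds $(3c)^{f(h,t)}$ with $f(h,t)=\binom{m+h}{h}$, we can afford roughly $f(h,t)$ such "upward steps" before the dimension is exhausted. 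The strategy is to use these steps to pull out, for a large sub-antichain $R$ of $S$, a common neighbor structure: a branch vertex $v$ together with, for each $r\in R$, a cover-predecessor $p$ of $v$ whose downset meets $R$ in exactly $\{r\}$. This is condition~\eqref{inv:clean-branching-for-V}, and it is precisely what lets one later route internally disjoint short paths between the branch vertices through the elements of $R$.

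First I would set up the recursion. Maintain a current antichain $S_i\subseteq S$ (starting $S_0=S$, so $|S_0|=m$) and the associated pair set $I_i=\{(x,y)\in\Inc(P): s\le y\ \forall s\in S_i\}$; the invariant is that $\dim(I_i)$ is still large (at least $(3c)^{\text{(budget left)}}$). At each step, because $\dim(I_i)\ge 2$, pick an alternating cycle, hence an incomparable pair, and chase the comparability $s\le y$ in $P$ up through a covering chain: there is an element $v$ of height at most $h$ lying above $s$, and in fact one can find a single $v$ that sits above a positive fraction of $S_i$. The pigeonhole here is on the "route" by which each $s\in S_i$ is connected upward to $v$ inside the cover graph $G$ — there are at most $\binom{t}{2}^{h^t}$-bounded-many combinatorial types of such routes (this is exactly where the exponent $h^t$ in $m=\binom{t}{2}^{h^t}$ and the non-subdivision-of-$K_t$ hypothesis enter), so some type is shared by at least $|S_i|^{1/h^t}$ of the elements; restrict $S_{i+1}$ to those. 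Adding $v$ to $V$ and invoking the dimension-drop bound of Claim~\ref{claim:small-dim} (applied with $q=v$) shows $\dim(I_{i+1})\ge \dim(I_i)/3c$, so the budget decreases by one and the height-vector value $\val(S_i)$ strictly increases, keeping the whole process within $f(h,t)$ rounds. After $t$ rounds we have $|V|=t$ and $|R|:=|S_t|\ge m^{1/h^t}=\binom{t}{2}$, which is~\eqref{inv:V-and-R-sizes}.

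The substantive point is to arrange condition~\eqref{inv:clean-branching-for-V}: for every $v\in V$ and every $r\in R$ there should be a cover-predecessor $p$ of $v$ with $\D[p]\cap R=\{r\}$. The idea is that when $v$ was harvested, each surviving $r$ reached $v$ by a covering chain, and its last-but-one vertex $p$ is a cover-predecessor of $v$ lying above $r$. To force $\D[p]\cap R$ to be the singleton $\{r\}$, one thins $R$ one more time per branch vertex: if two elements $r,r'\in R$ had a common element below a cover-predecessor of $v$, that would create a branching/merging pattern that, accumulated over all $t$ branch vertices and combined with the short covering chains (each of length at most $h-1$, so contributing at most $h-2$ subdivision vertices per edge, whence the $2h-3$), would already yield an $\le\!(2h-3)$-subdivision of $K_t$ in $G$ — contradicting the running assumption. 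So either we find the forbidden subdivision and are done with the theorem, or the cleanups succeed and the Claim holds. I expect the main obstacle to be exactly this cleanup bookkeeping: making the "distinct routes $\Rightarrow$ distinct branch structure" pigeonhole precise enough that the surviving $R$ simultaneously satisfies the clean-branching condition for all $t$ previously chosen branch vertices, while losing only a factor $m^{1/h^t}$ of $R$ at each of the $t$ rounds — this is where the particular form of $m$ is engineered to make the arithmetic close.
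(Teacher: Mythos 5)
Your overall plan (grow $V$ one branch vertex at a time while shrinking $R\subseteq S$, powered by Claim~\ref{claim:small-dim}) points in the right direction, but the two mechanisms you rely on do not work as stated. First, you use the dimension-drop of Claim~\ref{claim:small-dim} backwards. The claim gives an \emph{upper} bound $\dim(\set{(x,y)\in I: q\leq y})<\dim(I)/3c$, so you cannot conclude $\dim(I_{i+1})\geq \dim(I_i)/3c$ for your restricted sets and maintain a budget of $f(h,t)$ rounds; that budgeted recursion lives inside the proof of Claim~\ref{claim:small-dim} itself (via the height vectors), not here. In the actual argument $S$ and $I$ stay fixed, and the inequality is used \emph{additively}: summing it over the at most $t-1$ current branch vertices (each of which lies above some $r\in R\subseteq S$) and using $c>t$ shows that $\dim(\set{(x,y)\in I: y\notin \Up[V]})>1$, hence there exists a pair $(x,y)\in I$ with $y\notin\Up[V]$. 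This $y$ lies above \emph{all} of $S$ (by the definition of $I$) and above no previous branch vertex, which is what guarantees the new branch vertex is genuinely new; your proposal never secures this.

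Second, the heart of the step --- producing a single $v$ together with cover-predecessors realizing condition~\ref{inv:clean-branching-for-V} while keeping $\norm{R}$ large --- is not achieved by your ``pigeonhole on route types''. There is no bound of the form $\binom{t}{2}^{h^t}$ on the number of covering chains from elements of $S$ up to a vertex, and the hypothesis about subdivisions of $K_t$ plays no role in this claim (the theorem is proved in contrapositive form: the subdivision is \emph{built} afterwards from $(V,R)$). The correct mechanism is a greedy descent: start at the chosen $y$ (which satisfies $\norm{\D[y]\cap R}=\norm{R}$) and repeatedly move to a cover-predecessor $x$ whenever $\norm{\D[x]\cap R}>\norm{\D[v]\cap R}/m^{h^{-(j+1)}}$; after at most $h-2$ steps this stops at a $v\notin R$ that is not minimal, still with $\norm{\D[v]\cap R}\geq m^{h^{-(j+1)}}$. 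Then one takes the set $Z$ of elements covered by $v$, an inclusion-wise \emph{minimal} $Z'\subseteq Z$ with $\D[v]\cap R\subseteq \D[Z']$, and for each $z\in Z'$ a private representative $r_z\in(\D[z]\cap R)\setminus\D[Z'-\set{z}]$; setting $R':=\set{r_z: z\in Z'}$ gives both the singleton property $\D[z]\cap R'=\set{r_z}$ (so~\ref{inv:clean-branching-for-V} holds for $v$, and it persists for old branch vertices since $R'\subseteq R$) and, via $\norm{\D[v]\cap R}\leq\sum_{z\in Z'}\norm{\D[z]\cap R}\leq\norm{Z'}\cdot\norm{\D[v]\cap R}/m^{h^{-(j+1)}}$, the size bound $\norm{R'}\geq m^{h^{-(j+1)}}$. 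Your alternative --- thinning $R$ because a ``merging pattern'' would yield a $\leq\!(2h-3)$-subdivision of $K_t$ --- is unsupported (two elements of $R$ under a common cover-predecessor give no such subdivision), and your arithmetic is inconsistent: losing a factor $\norm{S_i}^{1/h^t}$ per round for $t$ rounds does not leave $m^{h^{-t}}=\binom{t}{2}$ elements, whereas the invariant $\norm{R}\geq m^{h^{-j}}$ after $j$ rounds does.
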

\begin{proof}
Choose disjoint sets $V\subset P$ and $R\subset S$ satisfying \ref{inv:clean-branching-for-V} and 
\[ 
\norm{V}=j \quad \textrm{ and } \quad |R|\geq m^{h^{-j}} = \binom{t}{2}^{h^{t-j}}, 
\]
 with $j \leq t$ as large as possible. 
 Note that $V=\emptyset$ and $R=S$ is a candidate, hence this choice is possible. 
 We claim that $j=t$, which implies the lemma. 
 Arguing by contradiction, assume $j<t$. 

 For every $v\in V$, there is $r\in R\subseteq S$ such that $r<v$ in $P$ by~\ref{inv:clean-branching-for-V}.
Hence, by Claim~\ref{claim:small-dim} 
\begin{align*}
\dim\left(\left\{(x,y)\in I: y \not\in \Up[V]\right\} \right)
&\geq \dim(I) - \sum_{v\in V} \dim(\set{(x,y)\in I: v\leq y \text{ in $P$}})\\
&> \dim(I) - \norm{V}\cdot \dim(I)/(3c)\\
&\geq 2 (1- t/(3c))\\
&> 1.
\end{align*}
(This is the place in the proof where we use our assumption that $c > t$.) 
It follows that the left-hand side is at least $2$.
Therefore, $\set{y: (x,y)\in I} - \Up[V]$ is not empty. 
(Recall that the dimension of an empty set of incomparable pairs is $1$.)
Choose some element $y$ in this set.

Now, starting from the element $y$, we go down along cover relations in the poset $P$. 
Initially we set $v:=y$, and as long as there is a element $x\in P$ such that $v$ covers $x$ in $P$ and 
\begin{equation}
\norm{\D[x]\cap R} > \norm{\D[v]\cap R}/m^{h^{-(j+1)}},\quad \textrm{we update $v:=x$.}\label{eq:v-update}
\end{equation}

Note that the process must stop as the height of $v$ is decreasing in every move. 
We claim that $v$ never goes down to a minimal element nor to an element in $R$.
Indeed, if in the above procedure we are considering an element $x$ covered by $v$, then after at most $h-2$ steps we are done, and hence 
\[
\norm{\D[v]\cap R} 
> \frac{\norm{\D[y]\cap R}}{m^{(h-2)h^{-(j+1)}}} 
= \frac{\norm{R}}{m^{(h-2)h^{-(j+1)}} }
\geq m^{h^{-j} - (h-2)h^{-(j+1)}} 
\geq m^{h^{-(j+1)}}.
\]
(Note that $\norm{\D[y]\cap R}=\norm{R}$ since $s \leq y$ in $P$ for all $s\in S$, by our choice of $y$.) 
Now, if $x$ is a minimal element or $x\in R$, then $\norm{\D[x]\cap R} \leq 1$ (note that $\norm{\D[x]\cap R} = 1$ when $x\in R$ because $R\subseteq S$ and $S$ is an antichain).  
Hence, the inequality of \eqref{eq:v-update} cannot hold strictly.  
Therefore, at the end of the process $v$ is not a minimal element of $P$ nor is included in $R$, as claimed.  

Consider now the set $Z$ consisting of all elements that are covered by $v$ in $P$.
Since $v\not\in R$, we have $\D[v]\cap R \subseteq \D[Z]$.
Let $Z'$ be an inclusion-wise minimal subset of $Z$ such that $\D[v]\cap R \subseteq \D[Z']$.
The minimality of $Z'$ allows us to fix for every $z\in Z'$ an element $r_z\in \D[v]\cap R$ such that $r_z\in \D[z]$ and $r_z\not\in \D[z']$ for every $z'\in Z' - \{z\}$. 
Let 
\[
 V' := V\cup\set{v}\quad \textrm{ and }\quad R' := \set{r_z: z\in Z'}. 
\]
Recall that we have chosen $y$ such that $w\not\leq y$ in $P$ for every $w\in V$.
On the other hand, by our procedure we have $v \leq y$ in $P$. 
Thus $v\not\in V$, and $\norm{V'}=j+1$.
We will show that the pair $(V', R')$ was a candidate for our choice at the beginning of the proof, and hence a better choice than the pair $(V, R)$. 

Since elements $r_z$, $r_{z'}$ are distinct for distinct $z,z'\in Z'$, we have $\norm{R'}=\norm{Z'}$.
Moreover,
\begin{align*}
 \norm{\D[v]\cap R} = \norm{\D[Z']\cap R} 
 \leq \sum_{z\in Z'}\norm{\D[z]\cap R} 
 \leq \norm{Z'}\cdot \frac{\norm{\D[v]\cap R}}{m^{h^{-(j+1)}}}.
\end{align*} 
We deduce that
\[
 \norm{R'}=\norm{Z'}\geq  m^{h^{-(j+1)}}.
\]

Note that $V'$ and $R'$ are disjoint since $V$ is disjoint from $R$ and since $v$ is not contained in $R$, and thus not in $R'$ either.

It remains to verify~\ref{inv:clean-branching-for-V} for $(V',R')$.
Since $R'\subseteq R$, we only need to check this property for the new vertex $v$.
Consider an element $r\in R'$.
By the definition of $R'$ there is $z\in Z'$ such that $r=r_z$.
Recalling the way we defined $r_z$, we obtain $\D[z]\cap R'=\set{r_z}$.
This shows that the pair $(V', R')$ was a better choice than $(V, R)$, a contradiction.  
\end{proof}

For the rest of the proof let $(V,R)$ be a pair given by the above claim. 
As mentioned, the vertices of $V$ will serve as the branch vertices of the subdivision of $K_t$ we are building. 
Next, we connect these vertices pairwise with internally vertex-disjoint paths. 
By \ref{inv:V-and-R-sizes} we have
\[
 \norm{V}=t \quad\textrm{ and }\quad \norm{R} \geq \binom{t}{2}.
\]
Thus, for each unordered pair $\{v_1,v_2\} \subseteq V$ we can choose a corresponding element $r_{\{v_1,v_2\}}\in R$ in such a way that all chosen elements are distinct. 
Furthermore, by Invariant~\ref{inv:clean-branching-for-V}, there are elements $p_1$ and $p_2$ covered respectively by $v_1$ and $v_2$ in $P$ such that $\D[p_1]\cap R=\D[p_2]\cap R=\set{r_{\{v_1,v_2\}}}$.  
Let 
\[
 r_{\{v_1,v_2\}}=u_1<u_2<\cdots<u_{k}<p_{1}<v_1\ \text{ and }\ r_{\{v_1,v_2\}}=w_1<w_2<\cdots<w_{\ell}<p_{2}<v_2
\]
be covering chains in $P$.
Clearly, the union of the two covering chains contains a path connecting the vertices $v_1$ and $v_2$ in $G$; fix such a path $Q_{\{v_1,v_2\}}$ for the unordered pair $\{v_1,v_2\}$.  
Observe that the path $Q_{\{v_1,v_2\}}$ has length at most $2h-2$. 

Connecting all other pairs of vertices in $V$ in a similar way, we claim that the union of these paths forms a subdivision of $K_t$. 
All we need to prove is that whenever there is $z\in Q_{\{v_1,v_2\}}\cap Q_{\{v_1',v_2'\}}$ for distinct sets $\set{v_1,v_2},\set{v_1',v_2'}\subseteq V$, 
then $z$ is an endpoint of both paths.
Suppose to the contrary that $z$ is an internal vertex of one path, say of $Q_{\{v_1,v_2\}}$.
By our construction, there are elements $p_1$ and $p_2$ covered respectively by $v_1$ and $v_2$ in $P$ with $\D[p_1]\cap R=\D[p_2]\cap R=\set{r_{\{v_1,v_2\}}}$.
Furthermore, we have $z\leq p_1$ or $z\leq p_2$ in $P$.
Say $z\leq p_1$ without loss of generality. 
From $z\in Q_{\{v_1',v_2'\}}$ we deduce that $r_{\{v_1',v_2'\}}\leq z$ in $P$, which implies that $r_{\{v_1',v_2'\}}\leq p_1$ in $P$.
However, it follows that $r_{\{v_1',v_2'\}}\in \D[p_1]\cap R=\set{r_{\{v_1,v_2\}}}$ and hence $r_{\{v_1,v_2\}}=r_{\{v_1',v_2'\}}$, a contradiction to our construction.
We conclude that both paths $Q_{\{v_1,v_2\}}$ and $Q_{\{v_1',v_2'\}}$ are indeed internally disjoint.

Finally, since all the paths $Q_{\{v_1,v_2\}}$ ($\set{v_1,v_2}\subseteq V$) have length at most $2h-2$, this shows the existence of a $\leq\!(2h-3)$-subdivision of $K_t$ in $G$, as desired.  
This completes the proof of Theorem~\ref{thm:nowhere-dense-upper-bound}.
\end{proof}

\section{Applications}
\label{sec:applications}

In this section we discuss some applications of Theorem~\ref{thm:dim-wcol}, starting with posets whose cover graphs have bounded genus. 
It was shown by van den Heuvel, Ossona de Mendez, Quiroz, Rabinovich, and Siebertz~\cite{HOQRS} that 
\[
 \wcol_r(G)\leq \left(2g+\binom{r+2}{2}\right)\cdot (2r+1)
\]
for every graph $G$ with genus $g$. 
Combining this inequality with Theorem~\ref{thm:dim-wcol}, we obtain the following upper bound.
\begin{corollary}
 For every poset $P$ of height at most $h$ whose cover graph has genus $g$, 
 \[
  \dim(P)\leq 4^{\left(2g+\binom{3h-1}{2}\right)\cdot(6h-5)}.
 \]
\end{corollary}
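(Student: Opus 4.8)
The plan is to combine the known bound on weak coloring numbers for bounded-genus graphs due to van den Heuvel \emph{et al.}~\cite{HOQRS} with Theorem~\ref{thm:dim-wcol}. Concretely, let $P$ be a poset of height at most $h$ whose cover graph $G$ has genus $g$. Theorem~\ref{thm:dim-wcol} gives $\dim(P)\leq 4^c$ where $c:=\wcol_{3h-3}(G)$, so it suffices to bound $c$ from above. For this I would plug $r:=3h-3$ into the inequality $\wcol_r(G)\leq \bigl(2g+\binom{r+2}{2}\bigr)\cdot(2r+1)$, which is valid since $G$ has genus $g$.

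The only work left is the routine arithmetic substitution. With $r=3h-3$ we have $r+2=3h-1$, so the binomial term becomes $\binom{3h-1}{2}$, and $2r+1 = 2(3h-3)+1 = 6h-5$. Hence
\[
  c = \wcol_{3h-3}(G) \leq \left(2g+\binom{3h-1}{2}\right)\cdot(6h-5).
\]
Substituting this into $\dim(P)\leq 4^c$ and using monotonicity of $x\mapsto 4^x$ yields
\[
  \dim(P)\leq 4^{\left(2g+\binom{3h-1}{2}\right)\cdot(6h-5)},
\]
which is exactly the claimed bound.

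There is essentially no obstacle here: the corollary is a direct consequence of two previously established results, and the argument is a one-line application of each followed by a substitution of parameters. The only thing to be slightly careful about is getting the index bookkeeping right ($r+2 = 3h-1$ and $2r+1 = 6h-5$ when $r = 3h-3$), but this is purely mechanical. One could also remark that, unlike Theorem~\ref{thm:nowhere-dense-upper-bound}, this approach gives an explicit closed-form bound because the weak coloring number of a bounded-genus graph is bounded by a function of $r$ and $g$ alone, independent of the number of vertices.
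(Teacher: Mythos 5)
Your proposal is correct and is exactly the paper's argument: combine the bound $\wcol_r(G)\leq \bigl(2g+\binom{r+2}{2}\bigr)(2r+1)$ of van den Heuvel et al.\ with Theorem~\ref{thm:dim-wcol} at $r=3h-3$, and the arithmetic ($r+2=3h-1$, $2r+1=6h-5$) is right.
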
 
For fixed genus, this is a $2^{\mathcal{O}(h^3)}$ bound on the dimension. 
In particular, this improves on the previous best bound for posets with planar cover graphs~\cite{JMMTWW}, which was doubly exponential in the height. 

It is in fact suspected that posets with planar cover graphs have dimension at most linear in their height. 
This was recently proved~\cite{JMW_PlanarPosets} for posets whose {\em diagrams} can be drawn in a planar way; these posets form a strict subclass of posets with planar cover graphs. 
Regarding posets with planar cover graphs, Kozik, Micek, and Trotter recently announced that they could prove a polynomial bound on the dimension. 
Let us remark that it is rather remarkable that linear or polynomial bounds can be obtained when assuming that the poset has a planar diagram or a planar cover graph, respectively.  
Indeed, for the slightly larger class of posets with $K_5$-minor-free cover graphs, constructions show that the dimension can already be exponential in the height, as shown in~\cite{JMW_PlanarPosets}. 
(This also follows from Theorem~\ref{thm:lb-treewidth} below, applied with $t=3$.) 

We continue our discussion with graphs of bounded treewidth. 
Let us first quickly recall the definitions of tree decompositions and treewidth (see e.g.\ Diestel~\cite{Diestel} for an introduction to this topic). 
A {\em tree decomposition} of a graph $G$ is a pair consisting of a tree $T$ and a collection $\{B_x \subseteq V(G) : x \in V(T)\}$ of sets of vertices of $G$ called {\em bags}, one for each node of $T$, satisfying: 
\begin{itemize}
	\item each vertex $v\in V(G)$ is contained in at least one bag; 
	\item for each edge $uv\in E(G)$, there is a bag containing both $u$ and $v$, and
	\item for each vertex $v\in V(G)$, the set of nodes $x\in V(T)$ such that $v \in B_x$ induces a subtree of $T$. 
\end{itemize}
The {\em width} of the tree decomposition is $\max_{x\in V(T)} \norm{B_x}-1$. 
The {\em treewidth} of $G$ is the minimum width of a tree decomposition of $G$. 

Grohe, Kreutzer, Rabinovich, Siebertz, and Stavropoulos~\cite{GKRSS} showed that 
\begin{align*}
 \wcol_r(G)\leq \binom{t+r}{t} 
\end{align*}
for every graph $G$ of treewidth $t$.  
Combining Theorem~\ref{thm:dim-wcol} with the above bound, we obtain a single exponential bound.
\begin{corollary}
 For every poset $P$ of height at most $h$ with a cover graph of treewidth $t$, 
 \[
  \dim(P)\leq 4^{\binom{t+3h-3}{t}}.
 \]
\end{corollary}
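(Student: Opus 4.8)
The plan is to combine Theorem~\ref{thm:dim-wcol} with the treewidth bound on weak coloring numbers recalled just above. First I would let $G$ be the cover graph of $P$ and use the hypothesis that $G$ has treewidth $t$. Applying the inequality of Grohe, Kreutzer, Rabinovich, Siebertz, and Stavropoulos~\cite{GKRSS} with radius $r:=3h-3$ (a nonnegative integer since $h\geq 1$) gives
\[
c:=\wcol_{3h-3}(G)\leq\binom{t+3h-3}{t}.
\]

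Next I would invoke Theorem~\ref{thm:dim-wcol}, which states precisely that $\dim(P)\leq 4^{c}$ with $c=\wcol_{3h-3}(G)$. Since the map $x\mapsto 4^{x}$ is increasing, the bound on $c$ obtained above propagates directly to
\[
\dim(P)\leq 4^{c}\leq 4^{\binom{t+3h-3}{t}},
\]
which is exactly the asserted inequality.

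There is essentially no obstacle here: the corollary is a one-line substitution, and the only things worth a brief check are that $3h-3$ is indeed the radius appearing in Theorem~\ref{thm:dim-wcol}, that this radius is nonnegative for every $h\geq 1$ (the degenerate case $h=1$ giving $\wcol_0(G)=1=\binom{t}{t}$, consistent with both bounds), and that the cited treewidth estimate applies verbatim since $G$ has treewidth at most $t$. All of these are immediate, so the proof reduces to chaining the two displayed inequalities.
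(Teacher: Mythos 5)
Your proof is correct and follows exactly the paper's route: the corollary is obtained by plugging the Grohe--Kreutzer--Rabinovich--Siebertz--Stavropoulos bound $\wcol_{3h-3}(G)\leq\binom{t+3h-3}{t}$ for treewidth-$t$ graphs into Theorem~\ref{thm:dim-wcol}. Nothing is missing; the monotonicity of $x\mapsto 4^x$ and the check that $r=3h-3\geq 0$ are exactly the (trivial) points one needs.
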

For fixed $t$, this is a $2^{\mathcal{O}(h^t)}$ bound on the dimension, which improves on the doubly exponentional bound in~\cite{JMMTWW}. 
Surprisingly, this upper bound turns out to be essentially best possible:

\begin{theorem}\label{thm:lb-treewidth}
 Let $t\geq 3$ be fixed.
 For each $h\geq 4$, there exists a poset $P$ of height at most $h$ whose cover graph has treewidth at most $t$, and such that
 \[
  \dim(P)\geq 2^{\Omega(h^{\lfloor(t-1)/2\rfloor})}.
 \]
\end{theorem}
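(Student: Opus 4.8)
The plan is to construct the required posets by iterating a ``blow-up'' operation starting from the standard examples, keeping careful track of both the treewidth of the cover graph and the height. The basic building block is the standard example $S_m$, which has dimension $m$, height $2$, and a cover graph ($=K_{m,m}$ minus a perfect matching) of treewidth $m$ --- far too large. The key idea, going back to Kelly's construction and its refinements, is that one can \emph{subdivide} the edges of the cover graph of $S_m$ (adding new poset elements on each cover relation) so that the cover graph becomes a tree-like gadget of bounded treewidth while the induced subposet on $\{a_1,\dots,a_m,b_1,\dots,b_m\}$ is still $S_m$, hence dimension stays $\geq m$. Each such subdivision trades treewidth for height: making the cover graph have treewidth $t$ forces the height to grow, and this is exactly the exponent trade-off $2^{\Omega(h^{\lfloor (t-1)/2\rfloor})}$ we are after. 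So the target is: for a budget of height $h$, build a poset of treewidth $\leq t$ and dimension $2^{\Omega(h^{\lfloor(t-1)/2\rfloor})}$, i.e. take $m = 2^{\Theta(h^{\lfloor(t-1)/2\rfloor})}$ and realize $S_m$ (up to a polynomial loss) with those parameters.

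First I would set up a \emph{recursive family} $P_{k,h}$ of posets: the base case $P_{1,h}$ realizes a standard example $S_{m_1}$ with $m_1$ polynomial in $h$ using a cover graph of treewidth at most $3$ (a caterpillar-like tree plus a few extra edges, as in Kelly-type constructions), and height $O(h)$. The recursive step combines many copies of $P_{k,h'}$ in a ``tree of copies'' pattern so that $P_{k+1,h}$ has dimension roughly $m_{k+1} \approx m_k^{\,\Theta(h)}$, treewidth increasing by $2$ at each step (hence treewidth $\approx 2k+1$ after $k$ steps), and height increasing by a constant factor of $h$ per step (hence height $\Theta(h)$ if we rescale, or $\Theta(h^{\text{depth}})$ if we do not rescale --- one has to be careful about exactly which invariant is multiplied). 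Writing $t = 2k+1$ or $t = 2k+2$, after $k \approx \lfloor (t-1)/2 \rfloor$ levels of recursion we get $m = m_1^{\Theta(h^{k})} = 2^{\Theta(h^{\lfloor(t-1)/2\rfloor})}$, which is the claimed bound on the dimension. The upper bound $\dim(P)\le m$ follows for free (we are only claiming a lower bound), and $\dim(P) \ge m$ follows because $S_m$ sits inside $P$ as an induced subposet and dimension is monotone under taking induced subposets.

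The technical heart has two parts. First, the \textbf{treewidth bookkeeping}: I need to show that each blow-up/subdivision step raises the treewidth of the cover graph by at most a fixed constant (ideally exactly $2$), which amounts to exhibiting an explicit tree decomposition for the recursively built cover graph --- gluing the tree decompositions of the constituent copies along small separators. This is where most of the care goes, because the cover graph of a poset is constrained (it must admit a consistent height function), so the subdivisions must be arranged to respect that while keeping bags small. Second, the \textbf{dimension lower bound}: I must verify that the recursive gadget genuinely contains a large standard example as an \emph{induced} subposet --- i.e. that no unwanted comparabilities are created between the designated minimal and maximal elements by the subdivision vertices or by the gluing. This is a routine but delicate check that the added elements lie ``strictly between'' the right pairs and nowhere else, exploiting that subdividing a cover relation $a<b$ with a single chain creates comparabilities only between $a$, $b$, and the new chain, not across different subdivided edges.

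The step I expect to be the main obstacle is calibrating the \emph{exact} exponent $\lfloor(t-1)/2\rfloor$: getting the recursion to increase treewidth by exactly $2$ per level (not $1$, not $3$) while multiplying the standard-example parameter by a full $h$-power, and simultaneously keeping the height linear (or polynomially controlled) in $h$. This is essentially an optimization of how aggressively one can subdivide at each level; it is plausible that a more naive construction only achieves, say, $2^{\Omega(h^{t-1})}$ with treewidth $t$ (a weaker-looking but differently-parametrized bound) or pays an extra factor in the height, and the floor function in the statement strongly suggests the authors use a \emph{two-dimensional} blow-up at each recursive level (pairing up ``coordinates'' of the embedding), which is precisely what makes two units of treewidth buy one unit of exponent. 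Pinning down that pairing, and proving the matching tree decomposition for it, is the crux.
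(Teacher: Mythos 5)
There is a genuine gap at the heart of your plan: the mechanism you propose for making the cover graph sparse --- subdividing each cover relation of $S_m$ by its own private chain, so that (as you put it) comparabilities are created ``only between $a$, $b$, and the new chain, not across different subdivided edges'' --- cannot work. If every comparability $a_i<b_j$ of $S_m$ is witnessed by its own internally disjoint covering chain, then the cover graph contains a subdivision of the cover graph of $S_m$ (which is $K_{m,m}$ minus a perfect matching) as a subgraph. Treewidth is minor-monotone and unchanged by subdivision, and that graph contains $K_{\lfloor m/2\rfloor,\lceil m/2\rceil}$, so the resulting cover graph has treewidth $\Omega(m)$ no matter how long you make the chains: subdividing never trades treewidth for height. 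The constructions that do work (Kelly's, and the paper's) rely on the opposite principle: covering chains are \emph{shared}, and most comparabilities between minimal and maximal elements arise by transitivity through common intermediate elements. Concretely, the paper builds posets $P_{h,t}$ by a two-parameter recursion: take $P_{h-1,t}$, and at each vertical pair $(a_i,b_i)$ of its induced standard example attach a fresh copy of $P_{h,t-1}$, with all its minimal elements placed below $a_i$ and all its maximal elements above $b_i$. A new minimal element is then below \emph{every} old maximal element above $a_i$ through the single vertex $a_i$; this sharing is exactly what keeps the cover graph thin. For the treewidth bookkeeping one glues the tree decomposition of each attached copy at a bag containing $\{a_i,b_i\}$ and adds $a_i,b_i$ to all of its bags, so the width grows by exactly $2$ per level of $t$, while the number of minimal/maximal elements multiplies, giving $k(h,t)=k(h-1,t)\cdot k(h,t-1)$, hence $\dim(P_{h,t})\geq 2^{\binom{h+t-1}{t}}$ at width $2t+1$; setting $h':=\lfloor h/2\rfloor$ and $t':=\lfloor(t-1)/2\rfloor$ gives the theorem.

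A secondary problem is your quantitative bookkeeping. With a base case realizing only $S_{m_1}$ for $m_1$ polynomial in $h$ at treewidth $3$, and a recursion $m_{k+1}\approx m_k^{\Theta(h)}$, after $k$ levels you reach only $2^{\Theta(h^{k-1}\log h)}$, one full power of $h$ short of the target $2^{\Theta(h^{k})}$ for treewidth $2k+1$. In the paper, already the treewidth-$3$ construction has dimension $2^{\Omega(h)}$: each increase of the height by $2$ \emph{doubles} the standard example, by attaching a small $S_2$ at every vertical pair, and the higher levels inherit this through the Pascal-type recursion above. So even after repairing the sparsification mechanism, you would need to replace the Kelly-style polynomial base case by one with exponential growth in the height, and organize the recursion so that comparabilities propagate through shared elements rather than private subdivision paths.
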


This theorem will be implied by the following slightly more technical theorem, which is an extension of the construction for treewidth $3$ in~\cite{JMW_PlanarPosets}.  
In this theorem, we use the following terminology: If a poset $P$ is such that the sets of its minimal and maximal elements induce a standard example $S_k$ then we call {\em vertical pair} each of the $k$ pairs $(a,b)$ with $a$ a minimal element, $b$ a maximal element, and $(a, b) \in \Inc(P)$. 

\begin{theorem}\label{thm:lb-tw-construct}
 For every $h\geq 1 $ and $t\geq 1$, there exists a poset $P_{h,t}$ and a tree decomposition of its cover graph such that
 \begin{enumerate}
  \item\label{item:tw-height} $P_{h,t}$ has height $2h$;
  \item\label{item:tw-standard} the minimal and maximal elements of $P_{h,t}$ induce the standard example $S_k$ with $k=2^{\binom{h+t-1}{t}}$; 
  \item\label{item:tw-width} the tree decomposition has width at most $2t+1$, and
  \item\label{item:tw-bag} for each vertical pair $(a,b)$ in $P_{h,t}$ there is a bag of the tree decomposition containing both $a$ and $b$.
 \end{enumerate}
\end{theorem}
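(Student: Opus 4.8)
The plan is to construct the posets $P_{h,t}$ recursively, the arithmetic engine being the Pascal identity $\binom{h+t-1}{t}=\binom{h+t-2}{t}+\binom{h+t-2}{t-1}$: writing $k(h,t):=2^{\binom{h+t-1}{t}}$ for the target number of vertical pairs, this says $k(h,t)=k(h-1,t)\cdot k(h,t-1)$, which dictates how to assemble $P_{h,t}$ from smaller posets. I would do induction on $t$ with an inner induction on $h$. For the base cases, $P_{h,0}$ is the poset whose cover graph is two vertex-disjoint paths on $2h$ vertices realising a long drawing of $S_2$ (height $2h$, treewidth $1$, two vertical pairs, and a tree decomposition with a bag for each of them), and $P_{1,t}:=S_2$ (cover graph a single edge, height $2$, and a two-bag tree decomposition of width $2\le 2t+1$ realising \ref{item:tw-bag}). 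The slice $t=1$ of the recursion below reproduces (a version of) the treewidth-$3$ construction of \cite{JMW_PlanarPosets}.

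For the inductive step (say $t\ge 1$, $h\ge 2$, with $P_{h-1,t}$ and $P_{h,t-1}$ already built), I would obtain $P_{h,t}$ by \emph{blowing up} each vertical pair of $P_{h-1,t}$ using a fresh copy of $P_{h,t-1}$. The treewidth budget forces this orientation: the host must be $P_{h-1,t}$, the poset already of the target width $2t+1$, while the gadgets are copies of $P_{h,t-1}$, of width only $2t-1$, so that attaching a gadget's tree decomposition at the host bag of the corresponding vertical pair and enlarging its bags by the (bounded) host-side interface costs only an additive $2$. Concretely: for each vertical pair $(a_j,b_j)$ of $P_{h-1,t}$, delete $a_j$ and $b_j$, glue in a fresh copy $C_j$ of $P_{h,t-1}$ with vertical pairs $(a^{(j)}_i,b^{(j)}_i)$, and hook it into the surviving interior of $P_{h-1,t}$, putting the minimal elements of $C_j$ below everything that used to cover $a_j$ and routing the former down-neighbours of $b_j$ up to all maximal elements of $C_j$ (adding near the top the handful of fresh connector elements that this routing needs). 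The two extra height levels of $P_{h,t-1}$ over $P_{h-1,t}$ are exactly what is used to splice this together into a poset of height precisely $2h$, which is \ref{item:tw-height}. The comparabilities inside $C_j$ give the ``$i\ne i'$'' relations of $S_{k(h,t-1)}$ within block $j$, and the surviving comparabilities $a_j<b_{j'}$ (for $j\ne j'$) of $P_{h-1,t}$, traced through the blow-up, give \emph{all} comparabilities between extremal elements of distinct blocks; since $a_j\not\le b_j$ in $P_{h-1,t}$ (so the former down-neighbours of $b_j$ are unreachable from the former up-neighbours of $a_j$), the construction creates no comparability $a^{(j)}_i<b^{(j)}_i$ inside a block and keeps each $C_j$ an induced subposet, so the extremal elements of $P_{h,t}$ induce exactly $S_{k(h-1,t)\cdot k(h,t-1)}=S_{k(h,t)}$, which is \ref{item:tw-standard}. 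For \ref{item:tw-width} and \ref{item:tw-bag}, start from the tree decomposition of $P_{h-1,t}$ (width $\le 2t+1$, with a bag holding $a_j$ and $b_j$ for each $j$; I would in fact maintain the stronger invariant that this bag also holds the unique relevant neighbour above and below), delete $a_j,b_j$ there, and attach at that bag the tree decomposition of $C_j$ after adding its $O(1)$-size interface to each of its bags: since $C_j$'s decomposition already has a bag for each of its vertical pairs, so does the resulting decomposition, whose width is $\le (2t-1)+2=2t+1$.

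The heart of the argument, and the main obstacle, is checking that all four invariants survive a blow-up step at once. Two points are genuinely delicate. The first is the comparability bookkeeping for \ref{item:tw-standard}: one must produce, for every pair of extremal elements from distinct blocks, a concrete chain realising the comparability that passes through the old interior of $P_{h-1,t}$ and reaches \emph{all} maximal elements of the target block, while at the same time proving that no chain realises an intra-block comparability and that nothing glued in becomes a new minimal or maximal element — this is the familiar subtle part of Kelly-type constructions, and it pins down precisely which old neighbours of $a_j,b_j$ are retained, how $C_j$ attaches to them, and which fresh connector elements near the top are needed to carry the down-neighbours of $b_j$ up to every maximum of $C_j$. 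The second is holding the treewidth at $2t+1$: the construction has to be engineered so that each blown-up block communicates with the rest of $P_{h,t}$ through only a bounded number of elements, which is exactly where the vertical-pair-in-a-bag invariant \ref{item:tw-bag} (suitably strengthened) is used — and must be re-established.

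The height accounting \ref{item:tw-height} is then a routine matter of using the two-level slack, and the width arithmetic of \ref{item:tw-width} is the bookkeeping just described. Finally, Theorem~\ref{thm:lb-treewidth} follows by applying Theorem~\ref{thm:lb-tw-construct} with $\lfloor h/2\rfloor$ in place of $h$ and $\lfloor (t-1)/2\rfloor$ in place of $t$: the resulting poset has height $\le h$, its cover graph has treewidth $\le t$, and it contains $S_k$ as an induced subposet with $k=2^{\binom{\lfloor h/2\rfloor+\lfloor (t-1)/2\rfloor-1}{\lfloor (t-1)/2\rfloor}}=2^{\Omega(h^{\lfloor (t-1)/2\rfloor})}$, so $\dim\ge k$.
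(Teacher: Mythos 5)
Your recursion skeleton (host $P_{h-1,t}$, one gadget per vertical pair taken from $P_{h,t-1}$, Pascal's identity driving \ref{item:tw-standard}) is the same as the paper's, but two concrete steps fail. First, your base case $P_{h,0}$ cannot exist as specified. In any poset of height at least $2$ whose extremal elements induce $S_2$, the comparabilities $a_1\le b_2$ and $a_2\le b_1$ are witnessed by two covering chains that must be vertex-disjoint (a common element would force $a_1\le b_1$), so the cover graph contains two disjoint paths joining $a_1$ to $b_2$ and $a_2$ to $b_1$. A tree decomposition containing a bag $\set{a_1,b_1}$ and a bag $\set{a_2,b_2}$ is also a valid tree decomposition of the cover graph with the edges $a_1b_1$ and $a_2b_2$ added, and that graph contains a cycle; hence any decomposition with property \ref{item:tw-bag} has width at least $2$, not $1$. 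With a width-$2$ gadget at $t=1$, your accounting ``gadget width $2t-1$ plus an interface of size $2$'' yields width $4$ rather than $2t+1=3$, and the loss propagates through the induction to roughly $2t+2$, which in turn weakens the exponent you can extract in Theorem~\ref{thm:lb-treewidth}. The paper avoids exactly this by never recursing to $t=0$: its $t=1$ step attaches a constant-size gadget ($S_2$) below and above each vertical pair, gaining one height level at a time, with three new bags of size $4$.

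Second, the delete-and-substitute splice does not keep the host-side interface bounded. After deleting $a_j$, its former covers are in general not unique --- in this very construction their number grows with $t$ --- and you place each of the (exponentially many) minimal elements of $C_j$ directly below every such cover; covering all these cover-graph edges forces the whole set of former covers of $a_j$ (and dually the former lower covers of $b_j$) into the gadget's bags, so the ``$+2$'' width bookkeeping and the ``unique relevant neighbour above and below'' invariant break down. The only way to keep the interface at two elements is to funnel: insert one element below all former covers of $a_j$ and above all of $C_j$'s minima, and one element above all former lower covers of $b_j$ and below all of $C_j$'s maxima. But those two funnels are precisely $a_j$ and $b_j$ themselves, i.e., you are led back to the paper's construction, in which nothing is deleted: the gadget's minimal elements are placed below $a_j$, its maximal elements above $b_j$, and $\set{a_j,b_j}$ is added to every bag of the gadget's decomposition attached at a bag containing the pair. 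In that version the height count ($1+2(h-1)+1=2h$), the standard-example check and the width bound $(2t-1)+2=2t+1$ are immediate, whereas in yours the connector bookkeeping needed to make \ref{item:tw-width} and \ref{item:tw-bag} true simultaneously is exactly what is missing.
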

\begin{proof}
 We prove the theorem by induction on $h$ and $t$.
 Let us first deal with the case $h=1$, which serves as the base cases for the induction.
 If $h=1$, then it is easy to see that letting $P_{1,t}$ be the standard example $S_2$ fulfills the desired conditions. 
 For the tree decomposition, it suffices to take a tree consisting of a single node whose bag contains all four vertices. 
 (We note that we could in fact take $P_{1,t} := S_{2t+1}$ and increase slightly the bound in~\ref{item:tw-standard} but the gain is negligible.)
 
 Next, for the inductive case, suppose that $h \geq 2$.  
 We treat separately the cases $t=1$ and $t \geq 2$. 

 First, suppose that $t=1$.
 The poset $P_{h,1}$ is defined using the inductive construction illustrated in Figure~\ref{fig:height-tw-ex} (left):
 We start with the poset $P_{h-1,1}$, and for each vertical pair $(a,b)$ in $P_{h-1,1}$ we introduce four elements $x_1,x_2,y_1,y_2$ forming a standard example $S_2$ (with $x$'s and $y$'s being minimal and maximal elements, respectively). 
 Then we add the relations $x_1 < a$ and $x_2 < a$, and $b < y_1$ and $b < y_2$, and take the transitive closure. 
 This defines the poset $P_{h,1}$.   
 It is easy to see that the height of $P_{h,1}$ is exactly the height of $P_{h-1,1}$ plus $2$, which implies \ref{item:tw-height}.
 It is also easily checked that the number of minimal (maximal) elements in $P_{h-1,1}$ is twice the number in $P_{h-1,1}$, which was $2^{h-1}$, and that the union of minimal and maximal elements induce the standard example $S_{2^h}$, showing~\ref{item:tw-standard}.
 
 Now, consider a tree decomposition of the cover graph of $P_{h-1,1}$ satisfying \ref{item:tw-width} and \ref{item:tw-bag}.
 For each vertical pair $(a,b)$ of $P_{h-1,1}$, consider a node $z$ of the tree whose bag $B_z$ contains both $a$ and $b$, and let $x_1,x_2,y_1,y_2$ be the four elements introduced when considering $(a,b)$ in the definition of $P_{h,1}$. 
 Extend the tree decomposition by adding three new nodes $z', z'_1, z'_2$ with bags $B_{z'}:= \{a,b,y_1,y_2\}$, $B_{z'_1}:= \{a,x_1,y_1,y_2\}$, $B_{z'_2}:= \{a,x_2,y_1,y_2\}$, and adding the three edges $zz', z'z'_1, z'z'_2$ to the tree, as illustrated in Figure~\ref{fig:tw-3-ex}. 
 Clearly, once this extension is done for each vertical pair of $P_{h-1,1}$, the resulting tree decomposition of the cover graph of $P_{h,1}$ satisfies \ref{item:tw-width} and \ref{item:tw-bag}. 
 
 Next, suppose that $t\geq 2$.
 We start with a copy of $P_{h-1,t}$ and let $(a_1,b_1),\ldots,(a_\ell,b_\ell)$ denote its vertical pairs.
 For each vertical pair $(a_i,b_i)$, we introduce a copy $P^i$ of $P_{h,t-1}$, and add the relation $x < a_i$ for each minimal element $x$ of $P_i$, and the relation $b_i < y$ for each maximal element $y$ of $P_i$; see Figure~\ref{fig:height-tw-ex} (right). 
 Then $P_{h,t}$ is obtained by taking the transitive closure of this construction.
 
 Observe that $P_{h,t}$ has height $2h$, thus~\ref{item:tw-height} holds. 
 Moreover, the minimal and maximal elements of $P_{h,t}$ induce a standard example $S_k$ with  
 \[
  k=2^{\binom{h+t-2}{t}}\cdot 2^{\binom{h+t-2}{t-1}}=2^{\binom{h+t-1}{t}} 
 \]
 by the induction hypothesis, showing~\ref{item:tw-standard}. 

 Next, consider the tree decomposition of the cover graph of $P_{h-1,t}$ given by the induction hypothesis.
 We extend this tree decomposition by doing the following for each vertical pair $(a_i,b_i)$ of $P_{h-1,t}$: 
 Consider a node $z^i$ of the tree whose bag contains both $a_i$ and $b_i$. 
 Take the tree decomposition of the cover graph of $P^i$ given by the induction hypothesis and denote its tree by $T^i$ (on a new set of nodes).  
 Then add an edge between the node $z^i$ and an arbitrary node of $T^i$, and  add $a_i$ and $b_i$ to every bag of nodes coming from $T^i$.
 It is easily checked that this defines a tree decomposition of the cover graph of $P_{h,t}$, of width at most $2t+1$, such that for each vertical pair $(a,b)$ of $P_{h,t}$ there is bag containing both $a$ and $b$. 
 Therefore, properties \ref{item:tw-width} and \ref{item:tw-bag} are satisfied. 
\end{proof}

\begin{figure}[t]
 \centering
 \includegraphics{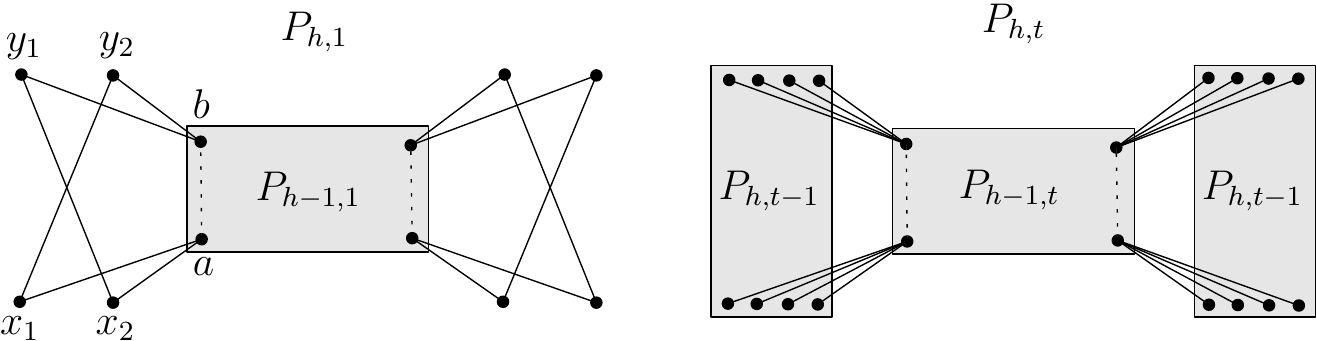}
 \caption{Inductive construction of $P_{h,t}$.}
 \label{fig:height-tw-ex}
\end{figure}

\begin{figure}[h]
 \centering
 \includegraphics{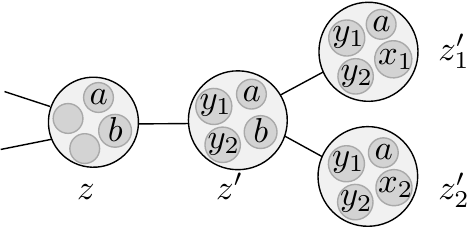}
 \caption{Extending the tree decomposition.}
 \label{fig:tw-3-ex}
\end{figure}

\begin{proof}[Proof of Theorem~\ref{thm:lb-treewidth}]
 Let $t\geq 3$ and $h\geq 4$. 
 Then we set $h':=\lfloor h/2\rfloor$ and $t'=\lfloor(t-1)/2\rfloor$.
 With these values, the poset $P_{h',t'}$ from Theorem~\ref{thm:lb-tw-construct} has height $2h'\leq h$ and its cover graph has treewidth at most $2t'+1\leq t$.
 Moreover, 
 \[
  \dim(P_{h',t'})\geq 2^{\binom{h'+t'-1}{t'}}=2^{\Omega(h'^{t'})}=2^{\Omega(h^{\lfloor (t-1)/2\rfloor})}.
 \]
(Recall that the asymptotics in the theorem statement are taken with respect to $h$ with $t$ being a fixed constant.)
\end{proof}

We pursue with the case of posets whose cover graphs exclude $K_t$ as a minor.
It was shown by van den Heuvel {\it et al.}~\cite{HOQRS} that 
 \[
  \wcol_r(G)\leq \binom{r+t-2}{t-2}\cdot (t-3)(2r+1)\in\mathcal{O}(r^{t-1})
 \]
for every graph $G$ excluding $K_t$ as a minor. 
Together with Theorem~\ref{thm:dim-wcol}, this yields the following improvement on the previous best bound~\cite{MW15}, which was doubly exponential in the height (for fixed $t$).
\begin{corollary}
 For every poset $P$ of height at most $h$ whose cover graph excludes $K_t$ as a minor, 
 \[
  \dim(P)\leq 4^{\binom{3h+t-5}{t-2}\cdot (t-3)(6h-5)}. 
 \]
\end{corollary}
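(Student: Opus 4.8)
The plan is to combine Theorem~\ref{thm:dim-wcol} with the weak-coloring-number bound for $K_t$-minor-free graphs due to van den Heuvel \textit{et al.}~\cite{HOQRS}, in exactly the same way as in the two preceding corollaries for bounded genus and bounded treewidth. Concretely, let $G$ be the cover graph of $P$ and set $c:=\wcol_{3h-3}(G)$. By Theorem~\ref{thm:dim-wcol} we have $\dim(P)\leq 4^c$, so it suffices to produce an upper bound on $c$.

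Since $G$ excludes $K_t$ as a minor, the inequality $\wcol_r(G)\leq\binom{r+t-2}{t-2}\cdot(t-3)(2r+1)$ holds for every $r\geq 0$. I would apply it with $r:=3h-3$. Then $r+t-2=3h+t-5$ and $2r+1=6h-5$, so $c=\wcol_{3h-3}(G)\leq\binom{3h+t-5}{t-2}\cdot(t-3)(6h-5)$. Since the map $s\mapsto 4^s$ is increasing, substituting this estimate for $c$ in the bound $\dim(P)\leq 4^c$ yields the claimed inequality.

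The argument has no real obstacle: it is a direct substitution into a cited inequality followed by an appeal to Theorem~\ref{thm:dim-wcol}. The only point meriting explicit mention is the monotonicity of $s\mapsto 4^s$, which is what justifies replacing $c$ by its upper estimate in the exponent; everything else is the arithmetic of setting $r=3h-3$.
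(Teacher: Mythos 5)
Your proposal is correct and is exactly the paper's argument: apply the van den Heuvel et al.\ bound on $\wcol_r$ for $K_t$-minor-free graphs with $r=3h-3$ (giving $\binom{3h+t-5}{t-2}\cdot(t-3)(6h-5)$) and substitute this into the $4^c$ bound of Theorem~\ref{thm:dim-wcol}. Nothing further is needed.
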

For a fixed integer $t \geq 5$, this $2^{\mathcal{O}(h^{t-1})}$ bound is again essentially best possible by Theorem~\ref{thm:lb-treewidth} (using an upper bound of $t-2 \geq 3$ on the treewidth), because graphs of treewidth at most $t-2$ cannot contain $K_{t}$ as a minor. 
On the other hand, it is no coincidence that we cannot use Theorem~\ref{thm:lb-treewidth} in this way when $t \leq 4$: Indeed, posets whose cover graphs exclude $K_4$ as a minor (or equivalently, have treewidth at most $2$) have dimension bounded by a universal constant (at most $1276$), irrespectively of their height~\cite{JMTWW}.

Regarding graphs $G$ that exclude $K_t$ as a topological minor, it is implicitly proven in the work of Kreutzer, Pilipczuk, Rabinovich, and Siebertz~\cite{KPRS} that these graphs satisfy
\[
 \wcol_r(G)\leq 2^{\mathcal{O}(r\log r)} 
\]
when $t$ is fixed. 
Combining this inequality with Theorem~\ref{thm:dim-wcol} we get a slight improvement upon the bound derived in~\cite{MW15}, however the resulting bound remains doubly exponential: 
\begin{corollary}
 Let $t \geq 1$ be a fixed integer. Then, every poset $P$ of height at most $h$ whose cover graph excludes $K_t$ as a topological minor satisfies  
 \[
  \dim(P)\leq 2^{2^{\mathcal{O}(h\log h)}}.
 \]
\end{corollary}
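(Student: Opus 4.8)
The plan is to combine Theorem~\ref{thm:dim-wcol} with the weak coloring number estimate of Kreutzer, Pilipczuk, Rabinovich, and Siebertz~\cite{KPRS} recalled immediately above the statement, namely that $\wcol_r(G) \in 2^{\mathcal{O}(r\log r)}$ for every graph $G$ excluding $K_t$ as a topological minor, with $t$ fixed. Concretely, let $P$ be a poset of height at most $h$ whose cover graph $G$ excludes $K_t$ as a topological minor. First I would apply the cited bound with $r := 3h-3$ (this is exactly the radius that appears in Theorem~\ref{thm:dim-wcol}) to obtain
\[
c := \wcol_{3h-3}(G) \leq 2^{\mathcal{O}((3h-3)\log(3h-3))} = 2^{\mathcal{O}(h\log h)},
\]
where the constant hidden in the $\mathcal{O}$ depends only on $t$.

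Then I would feed this into Theorem~\ref{thm:dim-wcol}, which yields $\dim(P) \leq 4^c$. Writing $4^c = 2^{2c}$ and absorbing the factor $2$ into the exponent, this gives
\[
\dim(P) \leq 4^c = 2^{2c} \leq 2^{2^{\mathcal{O}(h\log h)}},
\]
which is exactly the claimed bound.

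The only point requiring a little care is the bookkeeping of the asymptotics in the doubly exponential expression: one must check that the inner exponent $2^{\mathcal{O}(h\log h)}$ comfortably absorbs the extra factor of $2$ coming from $4^c = 2^{2c}$, as well as the additive and multiplicative constants incurred by replacing $3h-3$ with $h$ inside the logarithm. All of this is harmless, since we are free to hide a $t$-dependent multiplicative constant in the inner exponent. I do not expect any genuine obstacle here: the entire content of the corollary lies in composing the two black boxes (Theorem~\ref{thm:dim-wcol} and the KPRS estimate), so the argument is essentially a two-line substitution, which is consistent with the remark that the resulting bound is only a slight improvement on the previously known doubly exponential bounds.
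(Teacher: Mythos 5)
Your proposal is correct and matches the paper's argument exactly: the corollary is obtained by plugging the Kreutzer--Pilipczuk--Rabinovich--Siebertz bound $\wcol_{3h-3}(G)\leq 2^{\mathcal{O}(h\log h)}$ (with the constant depending on $t$) into Theorem~\ref{thm:dim-wcol}, giving $\dim(P)\leq 4^c\leq 2^{2^{\mathcal{O}(h\log h)}}$. The bookkeeping you mention is indeed harmless and handled the same way in the paper.
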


See Figure~\ref{fig:hierarchy} for a summary of the best known upper bounds and extremal examples for the various graph classes discussed in this section (and a few more).  
Bounds not already mentioned in the text can be found in~\cite{TM77,FTW13,JMW_PlanarPosets,Veit_PhD}. 

\begin{figure}[ht!]
 \centering
 \includegraphics{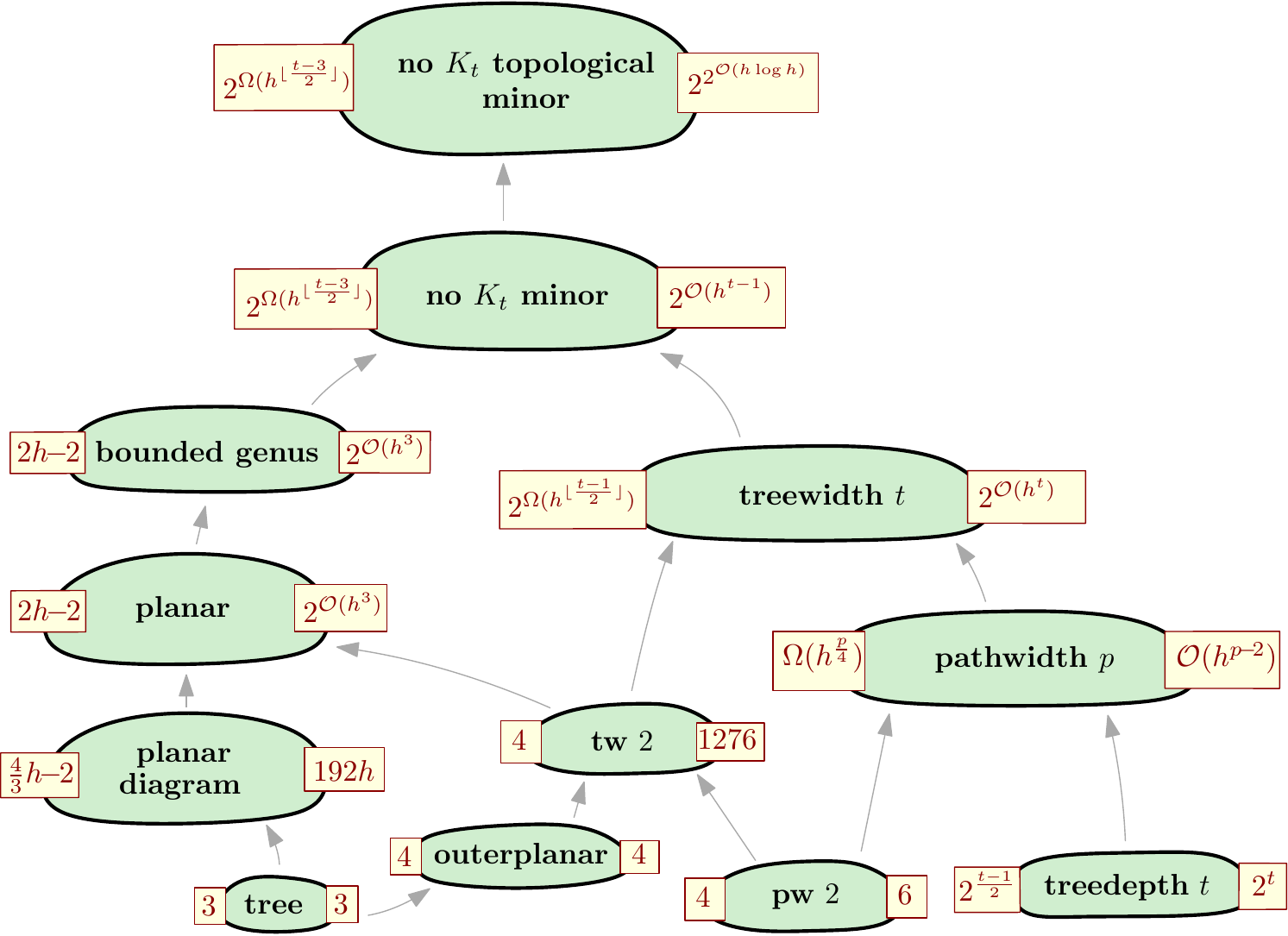}
 \caption{Summary of known bounds.\label{fig:hierarchy}} 
\end{figure}

\section{Open problems}
\label{sec:open_problems}

One remaining open problem is to prove the backward direction of Conjecture~\ref{conj:bounded_exp}, which we restate here: 

\begin{conjecture} 
Let $\calC$ be a monotone class of graphs such that for every fixed $h\geq 1$, posets of height at most $h$ whose cover graphs are in $\calC$ have bounded dimension. 
Then  $\calC$ has bounded expansion. 
\end{conjecture}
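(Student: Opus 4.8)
The plan is to prove the contrapositive: assuming $\calC$ is monotone and does \emph{not} have bounded expansion, I want to produce a fixed $h\ge 1$ for which posets of height at most $h$ with cover graphs in $\calC$ have unbounded dimension. The first step is a case split. If $\calC$ happens to be somewhere dense, we are already done, since the backward direction of Theorem~\ref{thm:main}, proved above, produces $n$-element posets of height at most $r+2$ with cover graphs in $\calC$ and dimension $\Omega(n^{1/2})$. So the whole difficulty lies in the remaining case, where $\calC$ is nowhere dense but not of bounded expansion. In that case, by the definition of bounded expansion used in this paper, there is a fixed integer $r\ge 1$ such that for every $c$ some graph in $\calC$ contains an $\le\!r$-subdivision of a graph of average degree at least $c$. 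Using that $\calC$ is monotone (so that this subdivision is itself in $\calC$), that a graph of large average degree has a subgraph of large minimum degree, and a pigeonhole over the at most $r+1$ possible subdivision lengths, I would extract, for a single value $j\in\{0,1,\dots,r\}$ and infinitely many $c$, a graph $G_c\in\calC$ that is the \emph{exact} $j$-subdivision of a graph $H_c$ with $\delta(H_c)\to\infty$ as $c\to\infty$.

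After this reduction the problem is purely combinatorial: build from $G_c$ a poset $P_c$ whose height is bounded by a function of $r$ alone, whose cover graph is a subgraph of $G_c$ (hence in $\calC$), and whose dimension tends to infinity with $c$. The natural attempt is to first pass to a bipartite subgraph of $H_c$ with parts $A_c,B_c$ whose minimum degree still tends to infinity with $\delta(H_c)$, then in the $j$-subdivision of this bipartite graph take $A_c$ as the minimal elements and $B_c$ as the maximal elements, orient every subdivided edge from its $A_c$-end up to its $B_c$-end, and take the transitive closure. This produces a poset of height $j+2\le r+2$ whose cover graph is exactly this subdivided bipartite graph, a subgraph of $G_c$; for $j=1$ it is the incidence poset of the bipartite graph, and for larger $j$ a ``stretched'' version of it. It would then suffice to show that subdivided bipartite posets of this form have dimension tending to infinity as $\delta(H_c)\to\infty$.

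\textbf{The hard part is exactly this last dimension \emph{lower} bound, and it is why the conjecture is still open.} All of the machinery developed here --- weak coloring numbers, Observation~\ref{obs-weak-reachability}, Lemma~\ref{lemma:q-support} --- is tuned to bound dimension from above and is of no help. Even the cleanest subcase $j=1$ seems to require knowing that bounded incidence-poset dimension forces bounded degeneracy of the underlying graph, with an \emph{effective and divergent} rate, so that $\delta(H_c)\to\infty$ genuinely forces the dimension up; I am not aware of such a quantitative statement, and moreover in the nowhere dense case $H_c$ is $K_{s,s}$-free for a fixed $s$ and may well be locally sparse (high girth), so one cannot hope to recover a large standard example or a biclique as a subposet directly. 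The general case $j\le r$ is harder still, since subdividing edges tends to \emph{decrease} dimension, so one must argue that this particular monotone orientation preserves enough of the structure of $H_c$. One might hope to invoke ``unavoidable dense pattern'' theorems --- e.g.\ that large average degree forces a large topological clique minor --- but these supply only \emph{long} subdivisions, which would destroy the bound on the height; what is needed is a new lower-bound argument tailored to bounded-length subdivisions, perhaps an entropy or counting argument showing that a bounded number of linear extensions cannot reverse all the incomparable pairs created inside a bounded subdivision of a high-minimum-degree bipartite graph. Even obtaining $\dim(P_c)=\omega(1)$, let alone the $\Omega(n^{\varepsilon})$ of Theorem~\ref{thm:main}, appears to require a genuinely new idea on the lower-bound side; such an argument, combined with Theorem~\ref{thm:main}, would complete the picture by identifying ``bounded expansion'' as exactly the regime of bounded dimension for bounded-height posets.
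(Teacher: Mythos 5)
This statement is the backward direction of Conjecture~\ref{conj:bounded_exp}, which the paper explicitly leaves open (``the backward direction remains surprisingly (and frustratingly) open''); there is no proof in the paper to compare against, and your proposal, as you yourself make clear, does not close the gap either. Your preliminary reductions are fine: the case where $\calC$ is somewhere dense is indeed dispatched by the backward direction of Theorem~\ref{thm:main} (standard examples with edges subdivided at most $r$ times give height-$(r+2)$ posets of unbounded dimension), and in the remaining case a pigeonhole over subdivision lengths together with monotonicity and the passage to a subgraph of large minimum degree does let you assume that $\calC$ contains exact $j$-subdivisions of (bipartite) graphs $H_c$ with $\delta(H_c)\to\infty$ for a single $j\le r$. (A small quibble: for $j=1$ your construction is a height-$3$ ``stretched'' bipartite poset, not the incidence poset of $H_c$; the incidence poset corresponds rather to $j=0$ with the roles of vertices and edges arranged differently. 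This does not affect the substance.)

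The genuine gap is the one you name: a dimension \emph{lower} bound showing that these bounded-height posets built from bounded-length subdivisions of graphs of large minimum degree have dimension tending to infinity. Nothing in the paper provides such a tool --- Observation~\ref{obs-weak-reachability}, Theorem~\ref{thm:dim-wcol} and Lemma~\ref{lemma:q-support} all push in the opposite (upper-bound) direction --- and no such result is known. It is worth noting that your $j=0$ subcase is essentially the contrapositive of the paper's second open problem in Section~\ref{sec:open_problems}: that for a monotone class of bipartite graphs, bounded dimension of the associated height-$2$ posets forces bounded average degree. So the missing step in your argument is itself posed by the authors as an open conjecture, and they explicitly suggest it as the natural ``first step'' toward Conjecture~\ref{conj:bounded_exp}. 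In short: your analysis correctly locates where the difficulty lies and is consistent with the status of the problem, but it is a program, not a proof, and the statement remains unproven both in your write-up and in the paper.
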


As a first step, one could try to show that graphs in the class  $\calC$ have bounded average degree. 
In this direction, we offer the following related conjecture.

\begin{conjecture} 
Let $\calC$ be a monotone class of bipartite graphs such that, seeing the graphs in $\calC$ as posets of height (at most) $2$, these posets have bounded dimension. 
Then  the graphs in $\calC$ have bounded average degree. 
\end{conjecture}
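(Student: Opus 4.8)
The plan is to prove the contrapositive: assuming that $\calC$ has unbounded average degree, we produce graphs in $\calC$ whose associated height-$2$ posets have unbounded dimension. The first step is routine. Since $\calC$ is monotone, having unbounded average degree is equivalent to having unbounded degeneracy, and therefore, for every integer $d$, there is a graph in $\calC$ containing a subgraph $G'$ of minimum degree at least $d$; moreover $G'\in\calC$ by monotonicity, and $G'$ is bipartite with both colour classes of minimum degree at least $d$. So the whole problem reduces to a statement about a single graph: \emph{a bipartite graph of minimum degree at least $d$ contains a subgraph whose height-$2$ poset has dimension at least $g(d)$, for some function $g$ with $g(d)\to\infty$.}

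The tempting first attempt, namely to find a large standard example $S_m$ as a subgraph, is doomed: bipartite graphs of large minimum degree may have girth $6$ (incidence graphs of projective planes), hence contain no $S_3$ at all. (Note, however, that such graphs \emph{do} have large dimension as posets — a counting argument on reversible sets gives dimension $\Omega(q)$ for the incidence graph of a projective plane of order $q$ — so the difficulty is purely one of \emph{finding} the right substructure inside an arbitrary dense bipartite graph, not of the existence of dense bipartite graphs with large dimension.) I would instead route through topological minors. By the Bollob\'as--Thomason / Koml\'os--Szemer\'edi theorem, $G'$ contains a subdivision of $K_t$ with $t=\Omega(\sqrt d)$; passing if necessary to an expanding subgraph first, one may take the subdivision paths to be short (polylogarithmic, or ideally of bounded length). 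In a bipartite host the branch vertices of this subdivision are split between the two colour classes and all paths have the appropriate parity, but the resulting object is still a perfectly good bipartite graph, which I will call a \emph{bipartite topological $K_t$}. The crux of the argument is then the claim that the height-$2$ poset of a bipartite topological $K_t$ has dimension $\Omega(\log\log t)$. This generalises the classical fact that the incidence poset of $K_t$ — which is precisely $K_t$ with every edge subdivided exactly once — has dimension $\Theta(\log\log t)$, and it would be proved by the same Ramsey-type counting: decompose the non-edges of the poset into reversible sets; observe that each reversible set corresponds to a linear extension of the poset in which the few, low-degree subdivision vertices impose a ``budget'' forcing every branch vertex to appear only after many of the subdivision paths incident to it lie entirely below it; bound the number of non-edges that any single reversible set can contain; and conclude that the total number of non-edges forces a number of reversible sets of order $\log\log t$.

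The main obstacle is exactly this extraction-and-lower-bound step. Unbounded average degree is a very weak hypothesis, and the two things it must be converted into are both delicate. First, one must arrange the $K_t$-subdivision to be simultaneously bipartite and ``short'' enough that the poset behaves like a blown-up incidence poset and does not get diluted into long low-degree fences; this seems to require isolating an expander-like subgraph before applying Koml\'os--Szemer\'edi, and then bounding the effect of any residual long subdivision paths on the dimension. Second, even the clean case of incidence posets needs a genuine Ramsey-type argument, and here the quantitative link is feeble: minimum degree $d$ yields dimension only of order $\log\log\sqrt d$, so one has to be careful that this really does tend to infinity and that none of the reductions secretly swallows the growth. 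A secondary, more conceptual obstacle is that there is no evident shortcut of the form ``for monotone classes, bounded dimension of height-$2$ posets is equivalent to a density condition'': the natural candidate, stability, fails, since the standard examples $S_m$ are uniformly stable (they contain no induced half-graph of order $3$) yet have dimension $m\to\infty$. So the topological-minor route, with all its technical overhead, appears to be the natural — and perhaps unavoidable — path, and making it rigorous is where the real work lies.
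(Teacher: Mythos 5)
This statement is one of the paper's \emph{open problems} (Conjecture~6); the paper offers no proof, so your proposal has to stand on its own, and it does not. The crux claim you rely on --- that the height-$2$ poset of a ``bipartite topological $K_t$'' has dimension $\Omega(\log\log t)$ --- is not just unproven, it is false once the subdivision paths have length at least $3$. Consider $K_t$ with every edge subdivided three times, with branch vertices as minimal elements: each maximal element lies above exactly one branch vertex and one degree-$2$ dummy, and one can write down a realizer of constant size, independent of $t$. Concretely: two extensions that put all dummies at the bottom and then process the branch vertices in an order $\sigma$ (resp.\ its reverse), inserting each maximal element immediately after its unique branch vertex, reverse all pairs (branch vertex, non-covering maximal); two extensions that put all branch vertices at the bottom and then process the pairs $\{i,j\}$ in an order (resp.\ its reverse), inserting each middle dummy followed by its two maximal covers, reverse all (dummy, foreign maximal) pairs; two further extensions handle min--min, max--max and (max,min) pairs. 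The $\log\log t$ lower bound for the incidence poset of $K_t$ comes precisely from maximal elements sitting above \emph{two} branch vertices, i.e.\ from paths of length exactly $2$; as soon as that coupling is diluted by one extra subdivision vertex, the dimension collapses to a constant, so ``behaving like a blown-up incidence poset'' is exactly what a longer subdivision does not do.

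This kills the reduction, because you cannot in general arrange paths of length at most $2$: a bipartite graph of girth at least $8$ and minimum degree $d$ (incidence graphs of generalized quadrangles, say) contains no $1$-subdivision of even $K_3$, since that would be a $6$-cycle, while Koml\'os--Szemer\'edi/Bollob\'as--Thomason subdivisions have paths of logarithmic (certainly $\geq 3$) length --- and by the above those are useless for a dimension lower bound. So the approach fails precisely at the step you call the crux, and not merely for quantitative reasons. Two smaller inaccuracies: a girth-$6$ bipartite graph does contain $S_3$ (any $6$-cycle is an induced copy; it is $S_4$ that is excluded, since $S_4$ contains $4$-cycles), and the parenthetical assertion that projective-plane incidence posets have dimension $\Omega(q)$ ``by a counting argument'' is stated without justification and should not be taken for granted. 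What would actually be needed is a substructure, extractable from mere large minimum degree inside a monotone class, whose height-$2$ poset provably has large dimension; identifying such a structure is exactly why the statement is left as a conjecture.
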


\section*{Acknowledgements} 

We are much grateful to the anonymous referees for their very helpful comments. 
In particular, we thank one referee for pointing out an error in the proof of Claim~\ref{claim:small-dim} regarding how element $q$ was chosen, and another referee for her/his many suggestions on how to improve the exposition of the proofs and shorthen the arguments. 


\bibliographystyle{plain}
\bibliography{posets-dimension}

\end{document}